\documentclass[12pt, reqno]{amsart}

\usepackage{amsmath,amssymb,amsfonts,amsthm}
\usepackage{color}

\usepackage[colorlinks=true,linkcolor=blue,citecolor=blue,urlcolor=blue,pdfborder={0 0 0}]{hyperref}
\usepackage{graphicx}
\usepackage{cleveref}
\usepackage{esvect}

\usepackage{bbm}
\usepackage{enumitem}

\usepackage{caption}
\usepackage{thmtools}
\usepackage{commath}
\usepackage{mathrsfs}

\usepackage{enumitem}
\usepackage{eufrak}

\crefname{theorem}{Theorem}{Theorems}
\crefname{thm}{Theorem}{Theorems}
\crefname{lemma}{Lemma}{Lemmas}
\crefname{lem}{Lemma}{Lemmas}
\crefname{remark}{Remark}{Remarks}
\crefname{prop}{Proposition}{Propositions}
\crefname{defn}{Definition}{Definitions}
\crefname{corollary}{Corollary}{Corollaries}
\crefname{conjecture}{Conjecture}{Conjectures}
\crefname{question}{Question}{Questions}
\crefname{chapter}{Chapter}{Chapters}
\crefname{section}{Section}{Sections}
\crefname{figure}{Figure}{Figures}

\theoremstyle{plain}
\newtheorem{thm}{Theorem}[section]
\newtheorem{lemma}[thm]{Lemma}
\newtheorem{lem}[thm]{Lemma}
\newtheorem{corollary}[thm]{Corollary}
\newtheorem{prop}[thm]{Proposition}

\theoremstyle{definition}

\theoremstyle{remark}
\newtheorem*{remark}{Remark}

\numberwithin{equation}{section}

\renewcommand{\P}{\mathbb P}
\newcommand{\E}{\mathbb E}

\newcommand{\R}{\mathbb R}
\newcommand{\Z}{\mathbb Z}
\newcommand{\N}{\mathbb N}

\newcommand{\D}{\mathbb D}

\newcommand{\eps}{\varepsilon}


\newcommand{\cB}{\mathcal B}
\newcommand{\cC}{\mathcal C}

\newcommand{\cE}{\mathcal E}

\newcommand{\cI}{\mathcal I}

\newcommand{\cM}{\mathcal M}

\newcommand{\cS}{\mathcal S}


\newcommand{\sA}{\mathscr A}
\newcommand{\sB}{\mathscr B}
\newcommand{\sC}{\mathscr C}


\newcommand{\bbG}{\mathbb G}




\usepackage{stmaryrd}
\usepackage[margin=1.3in]{geometry}

\usepackage{sidecap}
\title{Boundaries of Planar Graphs: A Unified Approach}

\newcommand{\Ceff}{\sC_{\mathrm{eff}}}

\newcommand{\CeffF}{\sC_{\mathrm{eff}}^F}

\author{Tom Hutchcroft and Yuval Peres}

\begin{document}

\maketitle
\begin{abstract} 
We give a new proof that the Poisson boundary of a planar graph coincides with the boundary of its square tiling and with the boundary of its circle packing, originally proven by Georgakopoulos \cite{G13} and Angel, Barlow, Gurel-Gurevich and Nachmias~\cite{ABGN14} respectively. Our proof is robust, and also allows us to identify the Poisson boundaries of graphs that are rough-isometric to planar~graphs.  

We also prove that the boundary of the square tiling of a bounded degree plane triangulation coincides with its Martin boundary. This is done by comparing the square tiling of the triangulation with its circle packing.
\end{abstract}

\section{Introduction}

Square tilings of planar graphs were introduced by Brooks, Smith, Stone and Tutte~\cite{BSST40}, and are closely connected to random walk and potential theory on planar graphs.  Benjamini and Schramm \cite{BS96b} extended the square tiling theorem to infinite, \emph{uniquely absorbing} plane graphs (see \cref{Background:Embedding}). These square tilings take place on the cylinder $\R/\eta\Z \times [0,1]$, where $\eta$ is the effective conductance to infinity from some fixed root vertex $\rho$ of $G$. They also proved that the random walk on a transient, bounded degree, uniquely absorbing plane graph converges to a point in the boundary of the cylinder $\R/\eta\Z\times\{1\}$, and that the limit point of a random walk started at $\rho$ is distributed according to the Lebesgue measure on the boundary of the cylinder. 

 Benjamini and Schramm \cite{BS96b} applied their convergence result to deduce that every transient, bounded degree planar graph admits \emph{non-constant bounded harmonic functions}. 
Recall that a function $h:V\to\R$ on the state space of a Markov chain $(V,P)$ is \textbf{harmonic} if 
\[h(u)=\sum_{v\sim u}P(u,v)h(v)\]
for every vertex $u\in V$, or equivalently if $\langle h(X_n)\rangle_{n\geq0}$ is a martingale when $\langle X_n\rangle_{n\geq0}$ is a trajectory  of the Markov chain. 
If $G$ is a transient, uniquely absorbing, bounded degree plane graph, then for each bounded Borel function $f:\R/\eta\Z\to\R$, we define a harmonic function $h$ on $G$ by setting
\[h(v) = E_v\left[f\left(\lim_{n\to\infty} \theta(X_n)\right)\right]\] 
for each $v\in V$, where $E_v$ denotes the expectation with respect to a random walk $\langle X_n \rangle_{n\geq0}$ started at $v$ and $\theta(v)$ is the horizontal coordinate associated to the vertex $v$ by the square tiling of $G$ (see \cref{Background:Tiling}).
Georgakopoulos \cite{G13} proved that moreover \emph{every} bounded harmonic function on $G$ may be represented this way, answering a question of Benjamini and Schramm \cite{BS96b}. 
In other words, Georgakopoulos's theorem identifies the geometric boundary of the square tiling of $G$ with the \emph{Poisson boundary} of $G$ (see \cref{sec:Poisson}). 
  Probabilistically, this means that the tail $\sigma$-algebra of the random walk $\langle X_n \rangle_{n\geq0}$ is trivial conditional on the limit of $\theta(X_n)$.

In this paper, we give a new proof of Georgakopoulos's theorem. We state our result in the natural generality of plane networks. Recall that a \textbf{network} $(G,c)$ is a connected, locally finite graph $G=(V,E)$, possibly containing self-loops and multiple edges, together with a function $c:E\to(0,\infty)$ assigning a positive \textbf{conductance} to each edge of $G$. The conductance $c(v)$ of a vertex $v$ is defined to be the sum of the conductances of the edges emanating from $v$, and for each pair of vertices $u,v$ the conductance $c(u,v)$ is defined to be the sum of the conductances of the edges connecting $u$ to $v$.  The random walk on the network is the Markov chain with transition probabilities $p(u,v)=c(u,v)/c(u)$. 
Graphs without specified conductances are considered networks by setting $c(e)\equiv1$. We will usually suppress the notation of conductances, and write simply $G$ for a network. Instead of square tilings, general plane networks are associated to \emph{rectangle tilings}, see \cref{Background:Tiling}. See \cref{Background:Embedding} for detailed definitions of plane graphs and networks. For each vertex $v$ of $G$, $I(v)\subseteq \R/\eta\Z$ is an interval associated to $v$ by the rectangle tiling of $G$.

\begin{thm}[Identification of the Poisson boundary]\label{Thm:generalPoisson}
Let $G$ be a plane network and let $\cS_\rho$ be the rectangle tiling of $G$ in the cylinder $\R/\eta\Z\times[0,1]$. Suppose that $\theta(X_n)$ converges to a point in $\R/\eta\Z$  and that $\mathrm{length}(I(X_n))$ converges to zero almost surely as $n$ tends to infinity. 
Then for every bounded harmonic function $h$ on $G$, there exists a bounded Borel function $f:\R/\eta\Z \to \R$ such that 
\[ h(v) =  E_v\left[f\left(\lim_{n\to\infty} \theta(X_n)\right)\right]. \]
for every $v\in V$. That is, the geometric boundary of the rectangle tiling of $G$ coincides with the Poisson boundary of $G$.
\end{thm}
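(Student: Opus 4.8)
The plan is to reduce the statement to a purely measure-theoretic identification of the tail $\sigma$-algebra, and then to supply the missing geometric input from the hypotheses together with planarity.

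First I would record the reduction. Let $h$ be a bounded harmonic function and let $X_0,X_1,\dots$ be the random walk. By the bounded martingale convergence theorem $h(X_n)$ converges almost surely and in $L^1$ to a bounded tail-measurable random variable $H_\infty$ with $h(v)=E_v[H_\infty]$ for every $v$. Writing $\theta_\infty:=\lim_{n\to\infty}\theta(X_n)$, which exists by hypothesis and is tail-measurable, the representation $h(v)=E_v[f(\theta_\infty)]$ holds for every $v$ precisely when $H_\infty=f(\theta_\infty)$ almost surely, since for an irreducible chain the tail $\sigma$-algebra carries the same null sets under all starting points. Thus the theorem is equivalent to the assertion that the tail $\sigma$-algebra $\mathcal T$ coincides modulo null sets with $\sigma(\theta_\infty)$. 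The inclusion $\sigma(\theta_\infty)\subseteq\mathcal T$ is immediate, so the entire content is the reverse inclusion: every bounded tail-measurable function is a function of $\theta_\infty$.

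To prove $\mathcal T\subseteq\sigma(\theta_\infty)$ I would establish a Fatou-type boundary-limit theorem along the tiling: there is a bounded Borel $f:\R/\eta\Z\to\R$ such that $h(v)\to f(\xi)$ whenever $v$ runs through vertices with $\theta(v)\to\xi$ and $\mathrm{length}(I(v))\to0$, for Lebesgue-almost every $\xi$. Granting this, the two hypotheses say exactly that $\theta(X_n)\to\theta_\infty$ and $\mathrm{length}(I(X_n))\to0$ almost surely, whence $h(X_n)\to f(\theta_\infty)$; comparing with $h(X_n)\to H_\infty$ gives $H_\infty=f(\theta_\infty)$, the required measurability. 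The boundary-limit theorem in turn reduces to an oscillation estimate: for a fine arc $J\subseteq\R/\eta\Z$, the values $h(v)$ over the ``deep'' vertices whose interval $I(v)$ is a tiny subinterval of $J$ vary by an amount tending to $0$ as $|J|\to0$. The tool I would use is the planar cut structure of the tiling: for generic horizontal coordinates $a<b$ the edges whose rectangles meet the vertical segments $\{a\}\times[0,1]$ or $\{b\}\times[0,1]$ form an edge-cutset of $G$ separating the vertices with $I(v)\subseteq(a,b)=:J$ from the rest. Because $\mathrm{length}(I(X_n))\to0$ while $\theta(X_n)\to\theta_\infty$, on the event $\{\theta_\infty\in J\}$ the walk is eventually confined to the column over $J$ and crosses this cutset only finitely often, and as $J$ shrinks around $\xi$ these columns nest down to the single fibre over $\xi$.

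The step I expect to be the main obstacle is converting this confinement into the oscillation bound. The soft hypotheses guarantee that the walk localizes horizontally, but turning localization into genuine tail-triviality on each fibre — that is, ruling out asymptotic randomness not recorded by $\theta_\infty$ — requires a quantitative estimate that the harmonic measure of a deep vertex over a small arc $J$ is almost entirely carried by $J$ and does not leak across the separating planar cut. This is where the non-crossing, laminar arrangement of the intervals must be used essentially, rather than merely the convergence statements: the planarity controls the conductance of the cuts and prevents two walks launched from deep vertices over a common tiny arc from developing distinguishable tails. Once this uniform non-leakage estimate is in hand, the oscillation bound, the Fatou theorem, and hence the identification $\mathcal T=\sigma(\theta_\infty)$ all follow.
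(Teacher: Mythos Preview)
Your reduction to showing that the tail $\sigma$-algebra coincides with $\sigma(\theta_\infty)$ is correct and standard. But the route you then propose --- a deterministic Fatou-type boundary-limit theorem, driven by an oscillation/non-leakage estimate across the vertical cuts of the tiling --- is \emph{not} the paper's argument, and the step you yourself flag as ``the main obstacle'' is precisely the step the paper's proof is built to avoid.

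The paper never attempts to control harmonic measure from an arbitrary ``deep'' vertex over a small arc. Instead it runs a second independent walk $\langle Y_n\rangle_{n\geq0}$ from $\rho$, with limit $\theta^-$, and uses the pair $\{\theta^-,\theta^+\}$ to cut the boundary circle into two arcs. The crucial probabilistic observation is that $Q_k:=\P\bigl(\lim_m\theta(Z^k_m)\in(\theta^-,\theta^+)\,\big|\,X_k,\theta^-\bigr)$ is \emph{exactly} uniform on $[0,1]$, so that $\min\{Q_k,1-Q_k\}$ stays bounded away from $0$ along a subsequence almost surely. Planarity then enters purely topologically: for any path $\langle v_i\rangle$ in $G$ with $\theta(v_i)\to\theta^+$, the set $\{v_i\}\cup\{Y_n\}$ disconnects $X_k$ from at least one of the two boundary arcs, so a fresh walk from $X_k$ hits $\{v_i\}\cup\{Y_n\}$ with probability at least $\min\{Q_k,1-Q_k\}$. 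A short L\'evy $0$--$1$ argument shows the $\{Y_n\}$-contribution is asymptotically negligible, and one concludes via the path-hitting criterion of \cref{thm:criterion}. No harmonic-measure estimate for a fixed vertex is ever needed; the argument never leaves the random walk trajectories themselves.

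Your approach, by contrast, asks for a uniform statement about \emph{all} vertices $v$ with $I(v)\subset J$: that their harmonic measures put most of their mass on $J$. Under the bare hypotheses of \cref{Thm:generalPoisson} --- no bounded degree, no bounds on conductances --- this does not follow by soft means: knowing that the walk localizes horizontally gives no uniform control over how an arbitrary such vertex exits the column over $J$, and there is no Harnack-type input available. You are right that the laminar structure of the tiling is the only possible source of such an estimate, but you have not extracted one, and doing so is essentially the content of Georgakopoulos's original proof that this paper is reproving. The whole point of the paper's argument is that the auxiliary walk $Y$ and the uniformity of $Q_k$ replace this analytic estimate with a two-line topological separation, which is also what makes the method portable to \cref{thm:roughembeddings} and \cref{thm:markovpoisson}.
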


The convergence theorem of Benjamini and Schramm \cite{BS96b} implies that the hypotheses of \cref{Thm:generalPoisson} are satisfied when $G$ has bounded degrees.

\subsection{Circle packing} An alternative framework in which to study harmonic functions on planar graphs is given by the circle packing theorem. A \textbf{circle packing} is a collection $\cC$ of non-overlapping (but possibly tangent) discs in the plane. Given a circle packing $\cC$, the \textbf{tangency graph} of $\cC$ is defined to be the graph with vertices corresponding to the discs of $\cC$ and with two vertices adjacent if and only if their corresponding discs are tangent. The tangency graph of a circle packing is clearly planar, and can be drawn with straight lines between the centres of tangent discs in the packing. The Koebe-Andreev-Thurston Circle Packing Theorem \cite{K36,Th78} states conversely that every finite, simple (i.e., containing no self-loops or multiple edges),  planar graph may be represented as the tangency graph of a circle packing. If the graph is a triangulation (i.e., every face has three sides), its circle packing is unique up to M\"obius transformations and reflections.  We refer the reader to \cite{St05} and \cite{Rohde11} for background on circle packing.

The \textbf{carrier} of a circle packing is defined to be the union of all the discs in
the packing together with the bounded regions that are disjoint from the discs in the packing and are enclosed by
the some set of discs in the packing corresponding to a face of the tangency graph.
Given some planar domain $D$, we say that a circle packing is \textbf{in
  $D$} if its carrier is $D$.

The circle packing theorem was extended to infinite planar graphs by He and Schramm \cite{HeSc}, who proved that every proper plane triangulation admits a locally finite circle packing in the plane or the disc, but not both. We call a triangulation of the plane \textbf{CP parabolic} if it can be circle packed in the plane and \textbf{CP hyperbolic} otherwise.  He and Schramm also proved that a bounded degree simple triangulation of the plane is CP parabolic if and only if it is recurrent for the simple random walk.  

\begin{figure}[t]
\centering
\includegraphics[trim = 2em .5em 2em .5em, clip, height=.45\textwidth]{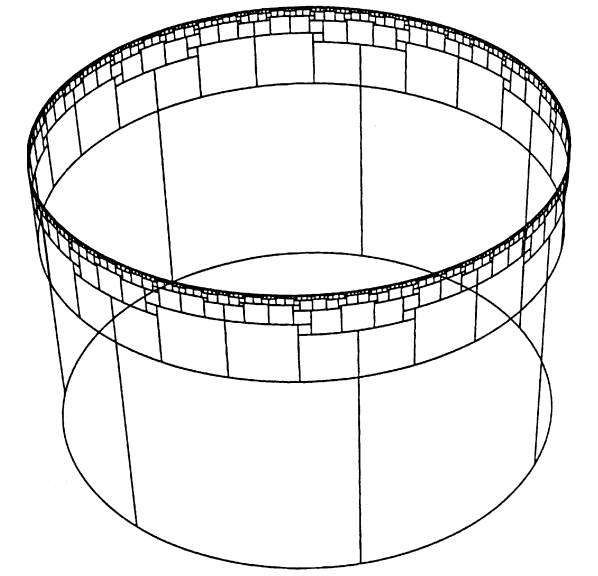} \qquad \includegraphics[trim = 1em 10.5em 3em 10em, clip,height=.45\textwidth]{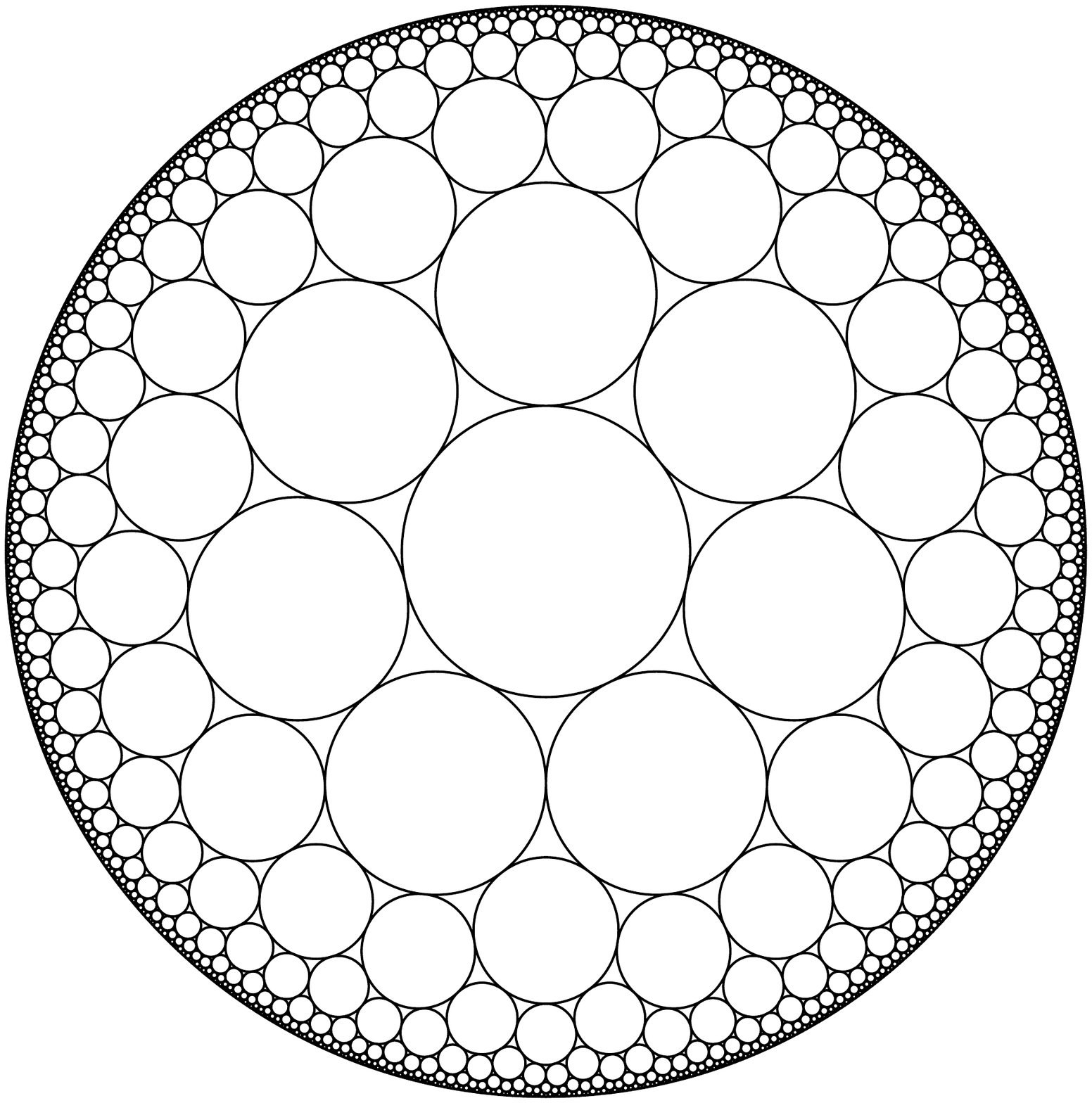}
\caption{The square tiling and the circle packing of the 7-regular hyperbolic triangulation.} 
\end{figure}

Benjamini and Schramm \cite{BS96a} proved that, when a bounded degree, CP hyperbolic triangulation is circle packed in the disc, the simple random walk converges to a point in the boundary of the disc and the law of the limit point is non-atomic and has full support. 
 Angel, Barlow, Gurel-Gurevich and Nachmias \cite{ABGN14} later proved that, under the same assumptions, the boundary of the disc is a realisation of the Poisson boundary of the triangulation. 
  These results were extended to unimodular random rooted triangulations of unbounded degree by Angel, Hutchcroft, Nachmias and Ray \cite{AHNR15}. Our proof of \cref{Thm:generalPoisson} is adapted from the proof of \cite{AHNR15}, and also yields a new proof of the Poisson boundary result of \cite{ABGN14}, which follows as a special case of both \cref{thm:roughembeddings,thm:markovpoisson} below.



\subsection{Robustness under rough isometries}
The proof of \cref{Thm:generalPoisson} is quite robust, and also allows us to characterise the Poisson boundaries of certain non-planar networks. 
Let $G=(V,E)$ and $G'=(V',E')$ be two graphs, and let $d$ and $d'$ denote their respective graph metrics. Recall that a function $\phi:V\to V'$ is a \textbf{rough isometry} if there exist positive constants $\alpha$ and $\beta$ such that the following conditions are satisfied:
\begin{enumerate}
\item (\textbf{Rough preservation of distances.}) For every pair of vertices $u$ and $v$ in $G$,
 \[\alpha^{-1}d(u,v) -\beta \leq d'(\phi(u),\phi(v)) \leq \alpha d(u,v)+\beta.\]
 \item (\textbf{Almost surjectivity.}) For every vertex $v' \in V'$, there exists a vertex $v\in V$ such that $d(\phi(v),v')\leq \beta$.
\end{enumerate}
Rough isometries were introduced  by Kanai \cite{Kanai85} and Gromov \cite{Gromov81,Gromov87}. For background on rough isometries and their applications, see \cite[\S 2.6]{LP:book} and \cite[\S7]{Soardibook}. We say that a network $G=(V,E)$ has \textbf{bounded local geometry} if there exists a constant $M$ such that $\deg(v)\leq M$ for all $v\in V$ and $M^{-1}\leq c(e)\leq M$ for all $e \in E$.

 Benjamini and Schramm \cite{BS96a} proved that every transient network of bounded local geometry that is rough isometric to a planar graph admits non-constant bounded harmonic functions. 
In general, however, rough isometries do not preserve the property of admitting non-constant bounded harmonic functions \cite[Theorem 3.5]{BS96a}, and consequently do not preserve Poisson boundaries. 

Our next theorem establishes that, for a bounded degree graph $G$ roughly isometric to a bounded degree proper plane graph $G'$, the Poisson boundary of $G$ coincides with the geometric boundary of a suitably chosen embedding of $G'$, so that the same embedding gives rise to a realisation of the Poisson boundaries of both $G$ and $G'$. See \cref{Background:Embedding} for the definition of an embedding of a planar graph.

\begin{thm}[Poisson boundaries of roughly planar networks]\label{thm:roughembeddings} Let $G$ be a transient network with bounded local geometry such that there exists a proper plane graph $G'$ with bounded degrees and a rough isometry $\phi: G \to G'$. Let $\langle X_n \rangle_{n\geq0}$ be a random walk on $G$. Then there exists an embedding $z$ of $G'$ into $\D$ such that $z\circ\phi(X_n)$ converges to a point in $\partial \D$ and the law of the limit point is non-atomic. Moreover, for every such embedding $z$ and for every bounded harmonic function $h$ on $G$, there exists  a bounded Borel function $f:\partial\D \to \R$ such that 
\[ h(v) =  E_v\left[f\left(\lim_{n\to\infty} z\circ\phi(X_n)\right)\right]. \]
for every $v\in V$. That is, the geometric boundary of the disc coincides with the Poisson boundary of $G$.
\end{thm}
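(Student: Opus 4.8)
The plan is to proceed in three stages: construct a good embedding from a circle packing, prove convergence and non-atomicity by adapting the extremal-length method of Benjamini--Schramm to the image walk, and identify the boundary through a harmonic-measure concentration estimate fed into a coupling argument. First I would transfer transience: since $G$ is transient and $\phi$ is a rough isometry between networks of bounded local geometry, $G'$ is transient (rough isometries preserve transience in this setting, cf.\ \cite{Kanai85,LP:book}). I would then add edges inside the faces of $G'$ to obtain a bounded-degree proper plane triangulation $T$ with $V(T)=V(G')$; as $T\supseteq G'$ it is transient, hence CP hyperbolic by \cite{HeSc}, so it admits a circle packing in $\D$. Let $z\colon V(G')\to\D$ send each vertex to the centre of its circle and set $w=z\circ\phi\colon V(G)\to\D$. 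Two standard consequences of bounded degree will be used: by the Ring Lemma neighbouring radii $r(u)$ are comparable, so a bounded number of tangency-steps moves one a bounded hyperbolic distance and $|z(u)-z(u')|\le C\,r(u)$ whenever $d'(u,u')\le D:=\alpha+\beta$; and $\sum_u r(u)^2\le 1$ since the circles are disjoint in $\D$. Because an edge of $G$ maps under $\phi$ to a pair at $G'$-distance $\le D$, and fibres $\phi^{-1}(u)$ have bounded diameter (hence bounded size), this yields the supporting bound $\sum_{xy\in E(G)}c(xy)\,|w(x)-w(y)|^2\lesssim\sum_u r(u)^2\,|\phi^{-1}(u)|<\infty$, so $w$ has finite Dirichlet energy on $G$.

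For the convergence assertion, transience of $G$ together with the rough isometry forces $\phi(X_n)$ to leave every finite subset of $G'$, so the packing radius $r(\phi(X_n))\to 0$ and $|w(X_n)|\to 1$, while consecutive images $w(X_n),w(X_{n+1})$ stay a bounded hyperbolic distance apart. I would then adapt the extremal-length / annulus-crossing argument of \cite{BS96a}: to oscillate between two boundary arcs the bounded-jump walk must cross a definite family of disjoint annuli, the expected number of crossings is summable because $\sum_u r(u)^2<\infty$, and Borel--Cantelli forces $w(X_n)$ to converge to a single point $Z_\infty\in\partial\D$. Applying the same estimate to thin annuli encircling a fixed boundary point shows that the hitting law charges no single point, giving non-atomicity.

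For the identification, fix a bounded harmonic $h$; then $h(X_n)$ is a bounded martingale with a.s.\ limit $M_\infty$ and $h(v)=E_v[M_\infty]$, so it suffices to show that $M_\infty$ is $\sigma(Z_\infty)$-measurable for every such $h$. The key step I would establish is a localisation estimate: there is $\eps(\delta)\to 0$ such that a walk started at any $v$ with $|w(v)|>1-\delta$ exits within Euclidean distance $\eps(\delta)$ of $w(v)/|w(v)|$ with probability at least $1-\eps(\delta)$; this is precisely harmonic-measure concentration and follows from the same annulus-crossing estimate, since exiting far away requires crossing a thick annulus. Feeding this into a confluence/coupling argument in the spirit of \cite{AHNR15}---running two independent copies, using concentration to trap both near $Z_\infty$, and coupling their futures once they enter a common shrinking boundary neighbourhood---would give $M_\infty=\hat h(Z_\infty)$ for a bounded Borel $\hat h$, hence $h(v)=E_v[\hat h(Z_\infty)]$. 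Finally, to obtain the conclusion for \emph{every} admissible embedding $z$ rather than only the circle packing, I would argue that any two proper embeddings of $G'$ into $\D$ along which the image walk converges induce, through the shared vertex set and the planar structure, a measure-space isomorphism of their boundaries intertwining the two exit points, so that the identification for the circle packing transfers to all such embeddings.

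The hard part will be the localisation/harmonic-measure-concentration estimate underlying both the convergence and the identification. The difficulty is that $w=z\circ\phi$ is \emph{not} a harmonic, or even approximately harmonic, embedding: the rough isometry supplies only metric (bounded-jump) control, so one cannot run a martingale argument on the coordinates as in the circle packing of a triangulation, and must instead convert bounded-jump control into control of harmonic measure via extremal length. I note that the corresponding statements for the planar graph $G'$ itself would follow from \cref{Thm:generalPoisson} together with the equivalence of the square-tiling and circle-packing boundaries; the genuine novelty here lies in the transfer across the rough isometry, which cannot exploit the non-planar walk's own harmonicity and so must be carried entirely by the disc geometry.
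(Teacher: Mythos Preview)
Your construction of the embedding and the convergence/non-atomicity arguments are essentially sound, though the paper takes a slightly different route: rather than extremal length and Borel--Cantelli, it invokes the finite-Dirichlet-energy convergence theorem of Ancona--Lyons--Peres \cite{AnLyPe99} (after the same energy bound you sketch), and for non-atomicity it uses the effective-conductance estimate of \cite[Corollary~5.2]{ABGN14} pulled back through the energy inequality $\cE_G(f\circ\phi)\le C\,\cE_{G'}(f)$. Either toolkit works here.

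The identification step, however, has a genuine gap. Your plan to ``couple their futures once they enter a common shrinking boundary neighbourhood'' is not justified: two random walks on a general bounded-geometry network need not merge even when started from nearby vertices, and a harmonic-measure concentration estimate does not by itself produce such a coupling. The paper's argument is structurally different and exploits planarity \emph{topologically} rather than through concentration. It first isolates a general \emph{path-hitting criterion} (\cref{thm:criterion}): if, for every path $\langle v_i\rangle$ in $G$ with $\psi(v_i)\to\lim_n\psi(X_n)$, a fresh walk $\langle Z^k_m\rangle$ from $X_k$ hits $\{v_i\}$ with $\limsup_k$-positive conditional probability, then $\psi$ realises the Poisson boundary. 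To verify this with $\psi=z\circ\phi$, one runs an independent walk $\langle Y_n\rangle$ and observes that the $\Phi$-extensions in $G'$ of the paths $\langle v_i\rangle$ and $\langle Y_n\rangle$ together disconnect $\phi(X_k)$ from one of the two boundary arcs cut out by $\xi^+=\lim z\circ\phi(X_n)$ and $\xi^-=\lim z\circ\phi(Y_n)$; hence the $\Phi$-image of $\langle Z^k_m\rangle$ must cross one of them. The rough isometry is then handled not by harmonicity but by \emph{ellipticity}: a crossing in $G'$ places $Z^k_\tau$ within bounded $G$-distance of $\{v_i\}\cup\{Y_n\}$, and bounded local geometry gives a uniform lower bound $\delta^{\alpha(2\alpha+3\beta)}$ on the probability of actually reaching that set. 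The probabilistic input making this nontrivial is that $Q_k=\P(\lim_m z\circ\phi(Z^k_m)\in(\xi^-,\xi^+)\mid \langle X_n\rangle,\langle Y_n\rangle)$ is uniform on $[0,1]$ given $X_k$ and $\xi^-$ (a direct consequence of non-atomicity), whence $\limsup_k\min\{Q_k,1-Q_k\}>0$ a.s.; one then removes the contribution of $\{Y_n\}$ via L\'evy's 0--1 law.

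Finally, your proposed transfer to ``every admissible embedding'' via a measure-space isomorphism of boundaries is unnecessary and would itself require justification. The paper's identification argument uses only that $z$ is a planar embedding of $G'$ for which $z\circ\phi(X_n)$ converges with non-atomic law, so it applies directly to any such $z$ without reference to the circle packing.
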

\noindent
The part of \cref{thm:roughembeddings} concerning the existence of an embedding is implicit in~\cite{BS96a}.
\medskip

A further generalisation of \cref{Thm:generalPoisson} concerns embeddings of possibly irreversible planar Markov chains: The only changes required to the proof of \cref{Thm:generalPoisson} in order to prove the following are notational.
\begin{thm}\label{thm:markovpoisson}
Let $(V,P)$ be a Markov chain such that the graph 
\[G=\left(V,\{(u,v) \in V^2 : P(u,v)>0 \;\mathrm{ or }\; P(v,u)>0\}\right)\]
is planar.  Suppose further that there exists a vertex $\rho \in V$ such that for every $v\in V$ there exists $n$ such that $P^n(\rho,v)>0$, and let $\langle X_n \rangle_{n\geq0}$ be a trajectory of the Markov chain. Let $z$ be a (not necessarily proper) embedding of $G$ into the unit disc $\D$ such that $\langle z(X_n) \rangle_{n\geq0}$ converges to a point in $\partial \D$ almost surely and the law of the limit point is non-atomic. 
Then for every bounded harmonic function $h$ on $(V,P)$, there exists a bounded Borel function $f:\partial\D \to \R$ such that 
\[ h(v) =  E_v\left[f\left(\lim_{n\to\infty} z(X_n)\right)\right]. \]
for every $v\in V$.
\end{thm}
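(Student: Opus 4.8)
The plan is to realise $(\partial\D,\text{exit measure})$ as the Poisson boundary by showing that the tail of the trajectory is, modulo null sets, measurable with respect to the exit point $\xi:=\lim_n z(X_n)$. First I would set up the martingale reduction. Since $h$ is bounded and harmonic, $\langle h(X_n)\rangle_{n\ge 0}$ is a bounded martingale under the law $\P_\rho$ of the trajectory started at $\rho$; by the martingale convergence theorem it converges almost surely and in $L^1$ to a limit $M_\infty$ with $h(v)=E_v[M_\infty]$, and $M_\infty$ is measurable with respect to the tail $\sigma$-algebra $\cT=\bigcap_n\sigma(X_n,X_{n+1},\ldots)$. It therefore suffices to produce a single bounded Borel $f:\partial\D\to\R$ with $M_\infty=f(\xi)$ almost surely under $\P_\rho$: the representation at an arbitrary $v$ then follows because every $v$ is accessible from $\rho$, so one may decompose $\P_\rho$ according to the first visit to $v$ and use the strong Markov property together with the fact that both $\xi$ and $M_\infty$ are tail-measurable and hence unaffected by the initial finite segment. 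The whole problem thus reduces to showing $\cT\subseteq\sigma(\xi)$ modulo $\P_\rho$-null sets, the reverse inclusion being automatic since $\xi$ is itself tail-measurable.

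To control the tail I would exploit planarity to build separating bi-infinite paths. Running two independent trajectories $\langle X_n\rangle$ and $\langle X_n'\rangle$ from $\rho$ and joining them at $\rho$ produces a bi-infinite path $\Gamma$ in the undirected graph $G$, whose two ends converge under $z$ to boundary points $\xi$ and $\xi'$ that are almost surely distinct because the exit measure is non-atomic. Using a combinatorial planar embedding of $G$ (its rotation system), $\Gamma$ separates $V\setminus\Gamma$ into a ``left'' part $L$ and a ``right'' part $R$, while $\xi,\xi'$ split $\partial\D$ into two complementary arcs. The key geometric claim is that an independent third trajectory $\langle Z_n\rangle$ from $\rho$ is eventually contained in exactly one of $L,R$, and that which one is determined by the arc containing its own exit point $\zeta=\lim_n z(Z_n)$ (almost surely $\zeta\neq\xi,\xi'$). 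Granting this, the event ``$Z$ is eventually in $L$'' is a tail event measurable with respect to $\zeta$; letting $\Gamma$ range over a countable family of such separating pairs shows that the tail is generated by $\zeta$-measurable events, yielding $\cT\subseteq\sigma(\xi)$ and completing the reduction.

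The hard part will be precisely this geometric claim, and it is where the hypotheses on $z$ do the real work. Because $z$ need not be proper, the path $\Gamma$ drawn in $\overline{\D}$ may self-cross, so one cannot simply invoke the Jordan curve theorem; the separation of $G$ must be carried out combinatorially and then transferred to the boundary arcs. I would argue that $Z$ commits to a single side by combining (i) the hypothesis that $z(Z_n)\to\zeta\in\partial\D$, with (ii) non-atomicity, which keeps $\zeta$ strictly inside one open arc and bounded away from $\xi,\xi'$, so that only finitely many excursions of $Z$ can reach the complementary arc and hence $Z$ crosses $\Gamma$ only finitely often in the relevant direction. Reconciling the combinatorial sides of $\Gamma$ with the two boundary arcs in the presence of an improper embedding, and ruling out infinitely many sign-changes, is the crux; everything else is the martingale bookkeeping of the first paragraph and a routine generation argument. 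This step is exactly the one that is identical, up to notation, to the corresponding step in the proof of \cref{Thm:generalPoisson} for rectangle tilings, with the hypothesis $\mathrm{length}(I(X_n))\to 0$ there playing the role of the convergence $z(Z_n)\to\zeta$ here.
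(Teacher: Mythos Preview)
Your reduction to showing $\cT\subseteq\sigma(\xi)$ modulo null sets is fine, and the planar separation idea --- joining two independent trajectories through $\rho$ to produce a path whose ends land at distinct boundary points --- is indeed the geometric heart of the argument. However, the way you propose to use it is not the paper's approach and, as written, contains a genuine gap.

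The gap is in the sentence ``letting $\Gamma$ range over a countable family of such separating pairs shows that the tail is generated by $\zeta$-measurable events''. You have argued that, for a fixed $\Gamma$, the tail event ``$Z$ is eventually in $L(\Gamma)$'' is $\sigma(\zeta)$-measurable. But this says nothing about an \emph{arbitrary} tail event: you would need to know that the events ``eventually in $L(\Gamma)$'', as $\Gamma$ ranges over your family, generate the full tail $\sigma$-algebra modulo null sets. There is no reason given for this, and it is not obvious --- a priori the tail could contain information finer than which side of every such $\Gamma$ the walk eventually lies on. (Your claim that this step ``is identical, up to notation, to the corresponding step in the proof of \cref{Thm:generalPoisson}'' is not accurate; the paper does not argue this way.)

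The paper instead routes everything through the path-hitting criterion, \cref{thm:criterion}. One fixes an arbitrary invariant event $\sA$, sets $h(v)=P_v(\sA)$, and uses L\'evy's $0$--$1$ law so that $h(X_n)\to\mathbbm{1}_\sA$. One then constructs an explicit Borel set $\sB\subset\partial\D$ (roughly, the set of $\xi$ reachable by some path along which $h$ stays bounded away from $0$) and shows $\sA\triangle\{\lim z(X_n)\in\sB\}$ is null. The planar separation is used not to decide which side $Z^k$ commits to, but only to force $\langle Z^k_m\rangle$ to \emph{hit} the union $\{v_i\}\cup\{Y_n\}$ with probability at least $\min\{Q_k,1-Q_k\}$, where $Q_k$ is shown to be uniform on $[0,1]$ given $\langle X_n\rangle_{n=0}^k$ and $\langle Y_n\rangle$. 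Optional stopping then gives $h(X_k)\ge \min\{Q_k,1-Q_k\}\cdot\inf_i h(v_i)$ after subtracting off the hitting of $\{Y_n\}$, which tends to zero. No ``generation'' argument is needed: each invariant event is handled directly. If you want to repair your sketch, the cleanest fix is to abandon the side-commitment/generation route and instead verify the hypothesis \eqref{eq:pathassumption} of \cref{thm:criterion} using exactly your $\Gamma=\{v_i\}\cup\{Y_n\}$; that is precisely what the paper does.
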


\subsection{The Martin boundary}

  In \cite{ABGN14} it was also proven that the \emph{Martin boundary} of a bounded degree CP hyperbolic triangulation can be identified with the geometric boundary of its circle packing. 
 Recall that a function $g:V\to\R$ on a network $G$ is \textbf{superharmonic} if 
\[g(u) \geq \frac{1}{c(u)}\sum_{v\sim u}c(u,v)g(v)\]
for every vertex $u\in V$.  
Let $\rho$ be a fixed vertex of $G$ and consider the space $\mathfrak{S}^+$ of positive superharmonic functions $g$ on $G$ such that $g(\rho)=1$, which is a convex, compact subset of the space of functions $V\to\R$ equipped with the product topology (i.e.~ the topology of pointwise convergence). For each $v\in V$, let $P_v$ be the law of the random walk on $G$ started at $v$. We can embed $V$ into $\mathfrak{S}^+$ by sending each vertex $u$ of $G$ to its \textbf{Martin kernel}
\[M_u(v):=\frac{E_v\left[\#(\text{visits to }u)\right]}{E_\rho\left[\#(\text{visits to }u)\right]}=\frac{P_v(\text{hit }u)}{P_\rho(\text{hit }u)}.\]
The \textbf{Martin compactification} $\cM (G)$ of the network $G$ is defined as the closure of $\{M_u : u \in V\}$ in $\mathfrak{S}^+$, and the \textbf{Martin boundary} $\partial \cM (G)$ of the network $G$ is defined to be the complement of the image of $V$,  \[\partial\cM(G):=\cM(G) \setminus\{M_u : u \in V\}.\]
See \cite{Dynkin69,RodgersWilliamsbookV1,Woess} for background on the Martin boundary. 

 
 Our next result is that, for a triangulation of the plane with bounded local geometry, the geometric boundary of the square tiling coincides with the Martin boundary. 

\begin{thm}[Identification of the Martin boundary]\label{Thm:Martin}
Let $T$ be a transient, simple,  proper plane triangulation with bounded local geometry. Let $\cS_\rho$ be a square tiling of $T$ in a cylinder $\R/\eta\Z\times[0,1]$.
Then 
\begin{enumerate}
\item A sequence of vertices $\langle v_n \rangle_{n\geq0}$ in $T$ converges to a point in the Martin boundary of $T$ if and only if $y(v_n)\to1$ and $\theta(v_n)$ converges to a point in $\R/\eta\Z$. 
\item The map
\[ M:\theta \longmapsto M_\theta :=\lim_{n\to\infty} M_{v_n} \text{ where } \left(\theta(v_n),y(v_n)\right)\to(\theta,1),\]
which is well-defined by $(1)$, is a homeomorphism from $\R/\eta\Z$ to the Martin boundary $\partial \cM (T)$ of $T$.
\end{enumerate}
That is, the geometric boundary of the rectangle tiling of $G$ coincides with the Martin boundary of $G$. 
\end{thm}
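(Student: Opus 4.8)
The plan is to deduce the theorem from the known identification of the Martin boundary with the circle packing boundary, by \emph{comparing} the square tiling of $T$ with its circle packing. Since $T$ is a transient, simple, bounded degree proper plane triangulation, it is CP hyperbolic by He--Schramm \cite{HeSc}, and hence admits a locally finite circle packing in $\D$; write $z\colon V\to\D$ for the map sending each vertex to the centre of its circle. By Angel, Barlow, Gurel-Gurevich and Nachmias \cite{ABGN14}, this packing identifies the Martin boundary of $T$ with $\partial\D$: a sequence $\langle v_n\rangle$ converges in the Martin compactification of $T$ to a point of $\partial\cM(T)$ if and only if $z(v_n)$ converges to some $\xi\in\partial\D$, and the induced map $\xi\mapsto M_\xi$ is a homeomorphism from $\partial\D$ onto $\partial\cM(T)$. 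It therefore suffices to construct a homeomorphism $\Psi\colon\partial\D\to\R/\eta\Z$ with the \emph{intertwining} property that, for every sequence of vertices, $z(v_n)\to\xi$ holds if and only if $y(v_n)\to1$ and $\theta(v_n)\to\Psi(\xi)$; the composition $\theta\mapsto M_{\Psi^{-1}(\theta)}$ then furnishes both assertions of the theorem at once.

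The first, \emph{radial}, step is to check that the two embeddings detect escape to infinity in the same way: for any sequence $\langle v_n\rangle$ the four conditions that $\langle v_n\rangle$ eventually leaves every finite subset of $V$, that $y(v_n)\to1$, that $|z(v_n)|\to1$, and that $\mathrm{length}(I(v_n))\to0$ are mutually equivalent. Indeed, $y$ is the voltage of the unit current flow from $\rho$ to infinity, so $1-y(v)$ is the probability that the walk from $v$ meets $\rho$ before escaping, which tends to $0$ as $v\to\infty$ by transience while being bounded away from $0$ on every finite set; local finiteness of the packing in $\D$ gives the equivalence with $|z(v_n)|\to1$; and the shrinking of the intervals then follows. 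In particular, convergence in the Martin boundary already forces $v_n\to\infty$ and hence $y(v_n)\to1$, so only the \emph{angular} coordinate remains to be matched.

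For the angular comparison, define $\Psi$ through the boundary coupling of the random walk. Since both $\partial\D$ (via $z$) and $\R/\eta\Z$ (via $\theta$) realise the Poisson boundary of $T$ --- the former by \cite{ABGN14}, the latter by \cref{Thm:generalPoisson}, whose hypotheses hold here because $T$ has bounded degree --- the limits $\xi_\infty=\lim_n z(X_n)$ and $\theta_\infty=\lim_n\theta(X_n)$ generate the same invariant $\sigma$-algebra, so there is a measurable $\Psi$ with $\theta_\infty=\Psi(\xi_\infty)$ almost surely; by the maximum principle applied to the harmonic function $v\mapsto P_v(\theta_\infty=\Psi(\xi_\infty))$ this identity holds $P_v$-almost surely for every $v$. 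Planarity forces $\Psi$ to preserve cyclic order almost surely, and since $\Psi$ carries the harmonic measure $\mu_z$ on $\partial\D$ to the law of $\theta_\infty$ under $P_\rho$, which by \cite{BS96b} is the uniform probability measure on $\R/\eta\Z$ and which, like $\mu_z$ \cite{BS96a}, is non-atomic of full support, $\Psi$ has a version that is an orientation-preserving homeomorphism; we take this version. To verify the intertwining, fix a sequence with $z(v_n)\to\xi$. Because the circles shrink, the exit law $\tilde\nu_{v_n}$ of $\xi_\infty$ under $P_{v_n}$ converges weakly to $\delta_\xi$, so $\nu_{v_n}:=\Psi_*\tilde\nu_{v_n}$, which is the law of $\theta_\infty$ under $P_{v_n}$, converges weakly to $\delta_{\Psi(\xi)}$ by continuity of $\Psi$; since $\mathrm{length}(I(v_n))\to0$ pins $\theta(v_n)$ to the support of the concentrating law $\nu_{v_n}$, we conclude $\theta(v_n)\to\Psi(\xi)$. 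The converse implication follows by compactness of $\partial\D$: any subsequence admits a further subsequence along which $z$ converges, whose limit is forced to equal $\Psi^{-1}(\theta)$.

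The main obstacle is exactly this angular step. The measurable identification of $\partial\D$ and $\R/\eta\Z$ as Poisson boundaries is not sufficient, because the Martin boundary is a topological object requiring a deterministic, order-respecting correspondence valid along \emph{all} sequences of vertices, not merely almost every random walk trajectory. The substance of the argument therefore lies in the two geometric inputs that upgrade the measurable picture: the almost-sure preservation of cyclic order by the boundary coupling, and the square-tiling concentration statement that $\theta(v_n)\to\theta$ is equivalent to weak convergence of $\nu_{v_n}$ to $\delta_\theta$ once $\mathrm{length}(I(v_n))\to0$ (the property that makes $\theta(v)$ track where the walk from $v$ exits). It is here that the planarity and bounded local geometry of $T$ are indispensable, and this is where I expect the real work to reside.
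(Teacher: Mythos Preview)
Your high-level reduction matches the paper's: invoke \cite{ABGN14} for the circle-packing identification of the Martin boundary, then establish a comparison theorem (\cref{thm:comparison} in the paper) asserting that $z(v_n)\to\xi\in\partial\D$ if and only if $(\theta(v_n),y(v_n))\to(\theta,1)$ for some $\theta$, with the induced boundary map a homeomorphism. The route to the comparison is where you diverge, and where the gaps lie.

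The paper's comparison argument is more direct and avoids the Poisson boundary entirely. It shows that for any path $\gamma$ in $T$ visiting each vertex finitely often, the three statements ``$\theta(\gamma_i)$ converges'', ``$z(\gamma_i)$ converges'', and ``the random walk a.s.\ hits $\{\gamma_i:i\geq0\}$ only finitely often'' are equivalent; since the last is intrinsic to $T$, it links the two embeddings. The forward implications hold because otherwise the limit would be an atom of the exit law; the reverse uses two independent walks and a planar separation argument. Interpolation lemmas (\cref{Lem:interpolation} and \cref{lem:interpolation2}) then upgrade this equivalence from paths to arbitrary sequences, and the homeomorphism follows.

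Your route through the Poisson boundary has two genuine gaps, which you yourself flag. First, ``planarity forces $\Psi$ to preserve cyclic order almost surely'' needs a proof: the Poisson-boundary identifications are purely measurable and carry no order information by themselves, so you would have to run a separate argument (e.g.\ with three independent walks and a planar crossing argument) that is already close in spirit to the paper's direct method. Second, and more seriously, the claim that $\theta(v_n)$ must track the support of the concentrating law $\nu_{v_n}$ is unjustified: weak convergence $\nu_{v_n}\to\delta_{\Psi(\xi)}$ does not force $\theta(v_n)\to\Psi(\xi)$, because there is no a priori lower bound on $\nu_{v_n}$ near $I(v_n)$; the walk from $v_n$ could in principle drift away before exiting. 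Establishing this would require a geometric input about the square tiling of strength comparable to \cref{Lem:interpolation}, so the saving over the paper's approach is illusory. A smaller issue: in your radial step, ``tends to $0$ as $v\to\infty$ by transience'' is not valid on its own; the paper proves this separately (the proposition preceding \cref{cor:stslocallyfinite}) using the circle packing and a lemma of Benjamini--Schramm.
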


This will be deduced from the analogous statement for circle packings \cite[Theorem 1.2]{ABGN14} together with the following theorem, which states that for a bounded degree triangulation $T$, the square tiling and circle packing of $T$ define equivalent compactifications of~$T$. 

\begin{thm}[Comparison of square tiling and circle packing]\label{thm:comparison} Let $T$ be a transient, simple, proper plane triangulation with bounded local geometry. Let $\cS_\rho$ be a rectangle tiling of $T$ in the cylinder $\R/\eta\Z\times[0,1]$ and let $\cC$ be a circle packing of $T$ in $\D$ with associated embedding $z$. 
Then 
\begin{enumerate}\item 
A sequence of vertices $\langle v_n \rangle_{n\geq0}$ in $T$ converges to a point in $\partial \D$ if and only if $y(v_n)\to1$ and $\theta(v_n)$ converges to a point in $\R/\eta\Z$.
\item The map 
\[ \xi \mapsto \lim_{i\to\infty}\theta(v_i) \text{ where } z(v_i)\to\xi, \]
which is well-defined by $(1)$, 
is a homeomorphism from $\partial\D$ to $\R/\eta\Z$. 
\end{enumerate}
\end{thm}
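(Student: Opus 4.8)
The plan is to establish a tight geometric comparison between the two embeddings—the rectangle tiling $\cS_\rho$ and the circle packing $\cC$—by showing that both record the same boundary behaviour of the random walk, and then promote this to a homeomorphism of boundaries. The key quantitative bridge should be a comparison between the radius $r(v)$ of the circle in $\cC$ centred at $z(v)$ and the dimensions of the rectangle $R(v)$ in the tiling, namely the horizontal length $\mathrm{length}(I(v))$ and the height of $R(v)$. For bounded degree triangulations, the Ring Lemma of Rodin and Sullivan guarantees that tangent circles have comparable radii, so neighbouring circles in $\cC$ have comparable sizes; I would use this together with the structure of the rectangle tiling (in which the conductances, and hence the aspect ratios of the rectangles, are controlled by the bounded local geometry) to argue that $\mathrm{length}(I(v))$ and $r(v)$ are comparable up to multiplicative constants, and similarly that the vertical coordinate $y(v)$ of the rectangle is comparable to $1 - |z(v)|$ (distance to $\partial\D$). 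This is the heart of the matter.

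First I would set up the dictionary: both $y(v_n)\to 1$ (in the tiling) and $z(v_n)\to\partial\D$ (in the packing) are characterisations of the walk escaping to the boundary, and by the convergence theorem of Benjamini--Schramm the walk does escape, with $\theta(X_n)$ converging and $\mathrm{length}(I(X_n))\to 0$. The content of part (1) is that convergence in one embedding forces convergence in the other with matched limits. To prove this I would show that if $z(v_n)\to\xi\in\partial\D$, then the circles $C(v_n)$ have radii tending to zero and accumulate at $\xi$; comparability of radii and interval lengths then forces $\mathrm{length}(I(v_n))\to 0$ and pins $\theta(v_n)$ to a single limit, since the intervals $I(v_n)$ must be nested or adjacent in a way dictated by the cyclic planar order, which the embeddings share. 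Conversely, convergence of $(\theta(v_n),y(v_n))\to(\theta,1)$ forces $z(v_n)$ into a shrinking region of $\D$ near the boundary whose angular extent shrinks, hence $z(v_n)$ converges. The crucial structural input is that both embeddings respect the same cyclic order of edges around each vertex, so that the induced orders on the two boundary circles $\R/\eta\Z$ and $\partial\D$ agree.

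For part (2), once the matching of limits is established, the map $\xi\mapsto\lim_i\theta(v_i)$ is well-defined and is a bijection between $\partial\D$ and $\R/\eta\Z$. I would verify it is a homeomorphism by checking it is monotone with respect to the common cyclic orientation and continuous in both directions; monotonicity plus continuity of a bijection between two circles yields a homeomorphism. Continuity follows from the comparability estimates: nearby boundary points in $\partial\D$ are approached by vertices whose $\theta$-coordinates are forced close by the radius/interval-length comparison, and vice versa.

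The main obstacle I anticipate is the comparability estimate between $r(v)$ (circle radius) and $\mathrm{length}(I(v))$ together with the comparison of the vertical coordinate and $1-|z(v)|$. The Ring Lemma controls the packing side cleanly, but transferring this to the tiling requires understanding how the effective-resistance structure underlying the square tiling interacts with the metric geometry of the packing. I expect one must combine the bounded local geometry with a resistance/capacity comparison—perhaps showing that $y(v)$ is controlled by the effective resistance from $v$ to infinity, which in turn is comparable across the rough-isometry-invariant scales captured by the circle packing via He--Schramm type estimates—and that controlling $\theta$ requires a careful argument that the horizontal displacement of the walk is comparable in the two coordinate systems. Making this comparison uniform (independent of $v$) using only the degree and conductance bounds is where the real work lies.
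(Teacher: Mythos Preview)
Your proposal is not a proof but a strategy whose central step is missing, and that step may in fact be false in the form you need. You want a uniform comparability $r(v)\asymp \mathrm{length}(I(v))$ and $1-|z(v)|\asymp 1-y(v)$, and you acknowledge that establishing this ``is where the real work lies.'' But note that the circle packing is only determined up to M\"obius transformations of $\D$, which can make circles in one region arbitrarily small relative to another, while the rectangle tiling is rigidly determined by the choice of root $\rho$. So no such uniform two-sided estimate can hold without further normalisation, and even after normalising, the theorem only asserts a \emph{topological} equivalence of boundaries, not a metric or quasisymmetric one; a uniform radius/interval comparison would be a much stronger statement than what is being claimed, and you give no argument for it. Your appeal to the Ring Lemma only controls ratios of \emph{neighbouring} circles and says nothing about how the packing scale relates to the potential-theoretic quantities $y(v)$ and $\mathrm{length}(I(v))$.

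The paper's argument is entirely different and much softer: it uses the random walk as a common intermediary. One shows that for any path $\gamma$ in $T$ visiting each vertex finitely often, $\theta(\gamma_i)$ converges if and only if $z(\gamma_i)$ converges, because each of these is equivalent to the same probabilistic statement, namely that the random walk almost surely hits $\{\gamma_i\}$ only finitely often. One direction uses non-atomicity of the limit law; the other uses that if $\theta(\gamma_i)$ has two accumulation points then two independent walks landing in opposite arcs separate them, forcing $\gamma$ to cross the walks infinitely often. Two interpolation lemmas (connecting a convergent sequence of vertices by a path with the same limit, in each embedding) promote this from paths to arbitrary sequences, and a short approximation argument gives the homeomorphism. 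No quantitative comparison between the two geometries is ever made; only non-atomicity and full support of the exit measures are used.
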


\cref{thm:comparison} also allows us to deduce the Poisson boundary results of \cite{G13} and \cite{ABGN14} from each other in the case of bounded degree triangulations. 

\cref{Thm:Martin} has the following immediate corollaries by standard properties of the Martin compactification.

\begin{corollary}[Continuity of harmonic densities]\label{Cor:densities} Let $T$ be a transient proper simple plane triangulation with bounded local geometry, and let $\cS_\rho$ be a rectangle tiling of $T$ in the cylinder $\R/\eta\Z\times[0,1]$. For each vertex $v$ of $T$, let $\omega_v$ denote the harmonic measure from $v$, defined by
\[\omega_v(\sA) := P_v\Big(\lim_{n\to\infty}\theta(X_n)\in\sA\Big)\]
for each Borel set $\sA\subseteq\R/\eta\Z$, and let $\lambda=\omega_\rho$ denote the Lebesgue measure on $\R/\eta\Z$.
Then for every $v$ of $T$, the density of the harmonic measure from $v$ with respect to Lebesgue is given by
\[ \frac{d\omega_v}{d\lambda}\!\left(\theta\right) = M_\theta(v) \]
which is continuous with respect to $\theta$.
\end{corollary}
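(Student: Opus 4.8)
The plan is to deduce the density formula from the classical Poisson--Martin representation of harmonic measure, and then to read off continuity directly from the homeomorphism furnished by \cref{Thm:Martin}. The role of \cref{Thm:Martin} is precisely to transport the abstract exit measure of the walk on $\partial\cM(T)$ to the measure $\omega_v$ on $\R/\eta\Z$, after which everything is standard Martin theory.

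First I would invoke the convergence theory of the Martin compactification. Since $T$ is transient, the sequence of Martin kernels $M_{X_n}$ converges $P_v$-almost surely to a boundary point $X_\infty\in\partial\cM(T)$, and the law $\nu_v$ of $X_\infty$ under $P_v$ is the exit measure of the chain on the Martin boundary. By \cref{Thm:Martin}$(1)$ the (almost sure) event that $M_{X_n}$ converges to a boundary point coincides with the event that $y(X_n)\to1$ and $\theta(X_n)$ converges, and on this event \cref{Thm:Martin}$(2)$ identifies the limit as $X_\infty=M_{\theta_\infty}$ with $\theta_\infty:=\lim_n\theta(X_n)$. Hence $\nu_v$ is the pushforward of $\omega_v$ under the homeomorphism $M:\R/\eta\Z\to\partial\cM(T)$; in particular $\nu_\rho=M_*\lambda$.

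Next I would appeal to the standard Radon--Nikodym form of the Poisson--Martin representation: the exit measures are mutually absolutely continuous and satisfy
\[ \frac{d\nu_v}{d\nu_\rho}(\xi)=\xi(v) \qquad\text{for } \nu_\rho\text{-a.e. } \xi\in\partial\cM(T). \]
Because $M$ is a Borel isomorphism and Radon--Nikodym derivatives transform correctly under pushforward by a bijection, substituting $\xi=M_\theta$ and using $\nu_v=M_*\omega_v$, $\nu_\rho=M_*\lambda$ gives
\[ \frac{d\omega_v}{d\lambda}(\theta)=M_\theta(v) \qquad\text{for } \lambda\text{-a.e. } \theta. \]
In particular $\omega_v\ll\lambda$, so the density is well defined, and this is the asserted formula up to a $\lambda$-null set.

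Finally, for continuity I would observe that $\theta\mapsto M_\theta(v)$ is the composition of the homeomorphism $M:\R/\eta\Z\to\partial\cM(T)$ of \cref{Thm:Martin}$(2)$ with the evaluation map $\xi\mapsto\xi(v)$, and the latter is continuous on $\mathfrak{S}^+$ because the Martin compactification carries the topology of pointwise convergence. Thus $\theta\mapsto M_\theta(v)$ is continuous, and since it agrees $\lambda$-almost everywhere with $d\omega_v/d\lambda$ while $\lambda$ has full support, it is the unique continuous representative of the density, which is what the statement asserts. The only genuinely delicate step is the first one---verifying that the abstract Martin-boundary limit $X_\infty$ is carried to $\lim_n\theta(X_n)$ by $M$, i.e.\ that $\nu_v=M_*\omega_v$---but this is exactly the content of the two clauses of \cref{Thm:Martin}, so the remainder is routine.
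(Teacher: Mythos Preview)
Your proposal is correct and is exactly the ``standard properties of the Martin compactification'' that the paper invokes without spelling out: the paper gives no explicit proof of \cref{Cor:densities}, merely asserting it as an immediate corollary of \cref{Thm:Martin}. Your argument---transporting the exit measure via the homeomorphism of \cref{Thm:Martin}(2) and applying the classical density formula $d\nu_v/d\nu_\rho(\xi)=\xi(v)$---is precisely the intended deduction, and the continuity argument is likewise the natural one.
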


\begin{corollary}[Representation of positive harmonic functions]\label{Cor:martinrep}
Let $T$ be a transient proper simple plane triangulation with bounded local geometry, and let $\cS_\rho$ be a rectangle tiling of $T$ in the cylinder $\R/\eta\Z\times[0,1]$. Then for every positive harmonic function $h$ on $T$, there exists a unique measure $\mu$ on $\R/\eta\Z$ such that
\[ h(v) = \int_{\R/\eta\Z}\! M_\theta(v) \dif\mu(\theta).\]
for every $v\in V$.
\end{corollary}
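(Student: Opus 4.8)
The plan is to derive this directly from \cref{Thm:Martin} together with the classical Poisson--Martin integral representation of positive harmonic functions on a transient Markov chain. That theorem asserts that for every positive harmonic function $h$ on $T$ there exists a finite measure $\tilde\mu$ carried by the \emph{minimal} Martin boundary $\partial_{\min}\cM(T)$ with $h(v)=\int M_\xi(v)\,\dif\tilde\mu(\xi)$ for all $v$, and that this representing measure is unique once it is required to be supported on $\partial_{\min}\cM(T)$. By \cref{Thm:Martin}, the map $\theta\mapsto M_\theta$ is a homeomorphism from $\R/\eta\Z$ onto $\partial\cM(T)$, so transporting $\tilde\mu$ through this homeomorphism immediately yields a measure $\mu$ on $\R/\eta\Z$ with $h(v)=\int_{\R/\eta\Z}M_\theta(v)\,\dif\mu(\theta)$; finiteness is automatic since $M_\theta(\rho)=1$ forces $\mu(\R/\eta\Z)=h(\rho)$.

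The crux of the argument, and what upgrades uniqueness from the minimal boundary to all of $\R/\eta\Z$, is to show that \emph{every} point $\theta\in\R/\eta\Z$ is minimal, i.e.\ that each $M_\theta$ is extremal in the cone of positive harmonic functions. I would establish this through the Doob $M_\theta$-transform: setting $\hat P(u,w)=M_\theta(w)P(u,w)/M_\theta(u)$, minimality of $\theta$ is equivalent to the statement that the $\hat P$-walk converges to $\theta$ in the Martin topology almost surely, for which it suffices by \cref{Thm:Martin} to show $\theta(\hat X_n)\to\theta$ and $y(\hat X_n)\to1$ almost surely. This in turn I would obtain by realising the $M_\theta$-transformed walk as the limit, as an arc $A\ni\theta$ shrinks to $\{\theta\}$, of the original walk conditioned on $\{\lim_n\theta(X_n)\in A\}$: the conditioned transition probabilities are $P(v,w)\,\omega_w(A)/\omega_v(A)$, and by \cref{Cor:densities} the density $M_\theta(v)$ is strictly positive and continuous in $\theta$, so these ratios converge to $M_\theta(w)P(v,w)/M_\theta(v)=\hat P(v,w)$, while each conditioned walk has its limiting horizontal coordinate confined to $\bar A$.

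I expect this minimality step to be the main obstacle, since the Poisson boundary identification gives for free only that $\lambda$-almost every point of $\R/\eta\Z$ is minimal: the exit point $\lim_n\theta(X_n)$ lies in $\partial_{\min}\cM(T)$ almost surely and $\omega_\rho=\lambda$. Promoting this to every point genuinely uses the continuity and strict positivity of the harmonic densities from \cref{Cor:densities} to control the conditioned laws uniformly near an arbitrary $\theta$. Once every point is known to be minimal we have $\partial_{\min}\cM(T)=\partial\cM(T)\cong\R/\eta\Z$, and the uniqueness clause of the Poisson--Martin representation then gives uniqueness of $\mu$ among all measures on $\R/\eta\Z$, completing the proof.
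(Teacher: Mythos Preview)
Your plan is correct, and in fact the paper offers no separate proof: it declares that \cref{Cor:martinrep} (like \cref{Cor:densities}) follows from \cref{Thm:Martin} ``by standard properties of the Martin compactification''. The existence half is exactly what you wrote: pull back the Poisson--Martin representing measure through the homeomorphism $\theta\mapsto M_\theta$.

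Where you do more work than the paper is on uniqueness. You correctly identify that uniqueness among \emph{all} measures on $\R/\eta\Z$ requires every boundary point to be minimal, and you propose proving this directly via the Doob $M_\theta$-transform, realising it as a limit of walks conditioned to exit in shrinking arcs. This is a sound strategy, but the passage ``the conditioned transition probabilities converge to $\hat P$, and the conditioned limit lies in $\bar A$, hence the $\hat P$-limit is $\theta$'' hides a genuine step: convergence of one-step kernels gives only convergence of finite-dimensional marginals on path space, and the functional $\langle x_n\rangle\mapsto\lim_n\theta(x_n)$ is not continuous for that topology, so some additional tightness or martingale argument is needed to close the gap.

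The paper sidesteps this entirely. Minimality of a Martin boundary point is an intrinsic property of the harmonic function $M_\xi$ and does not depend on which geometric realisation of $\partial\cM(T)$ one uses. Since \cref{Thm:Martin} is obtained by transporting \cite[Theorem~1.2]{ABGN14} through the homeomorphism of \cref{thm:comparison}, and the result of \cite{ABGN14} already establishes that every point of $\partial\D$ is a minimal Martin boundary point, the equality $\partial_{\min}\cM(T)=\partial\cM(T)$ comes for free along with the identification. Thus the paper's appeal to ``standard properties'' genuinely covers both existence and uniqueness without further argument. Your route has the virtue of being self-contained in the square-tiling picture and of exercising \cref{Cor:densities}, but it is reproving something already available from the circle-packing side.
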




\section{Background}

\subsection{Notation} We use $e$ to denote both oriented and unoriented edges of a graph or network. An oriented edge $e$ is oriented from its tail $e^-$ to its head $e^+$. Given a network $G$ and a vertex $v$ of $G$, we write $P_v$ for the law of the random walk on $G$ started at $v$ and $E_v$ for the associated expectation operator. 

\subsection{Embeddings of Planar Graphs}\label{Background:Embedding}
Let $G=(V,E)$ be a graph.
  For each edge $e$, choose an orientation of $e$ arbitrarily and let $I(e)$ be an isometric copy of the interval $[0,1]$. 
The metric space $\mathbb{G}=\bbG(G)$ is defined to be the quotient of the union $\bigcup_e I(e) \cup V$, where we identify the endpoints of $I(e)$ with the vertices $e^-$ and $e^+$ respectively, and is equipped with the path metric. 
 An \textbf{embedding of $G$} into a surface $S$ is a continuous, injective map $z:\bbG\to S$. The embedding is \textbf{proper} if every compact subset of $S$ intersects at most finitely many edges and vertices of $z(\bbG)$. 
A graph is planar if and only if it admits an embedding into $\R^2$.  A \textbf{plane graph} is a planar graph together with an embedding $\bbG(G)\to \R^2$; it is a \textbf{proper plane graph} if the embedding $z$ is proper. 
A \textbf{(proper) plane network} is a (proper) plane graph together with an assignment of conductances $c:E\to(0,\infty)$. 
A \textbf{proper plane triangulation} is a proper plane network in which every face (i.e connected component of $\R^2 \setminus z(\bbG)$) has three sides. 

A set of vertices $W \subseteq V$ is said to be \textbf{absorbing} if with positive probability the random walk 
$\langle X_n \rangle_{n\geq0}$ on $G$ is contained in $W$ for all $n$ greater than some random $N$.  
A plane graph $G$ is said to be \textbf{uniquely absorbing} if for every finite subgraph $G_0$ of $G$, there is exactly one connected component $D$ of  $\R^2\setminus \bigcup\{z(I(e)) : e \in G_0\}$ such that the set of vertices $\{v\in V: z(v) \in D\}$ is absorbing.


\subsection{Square Tiling}\label{Background:Tiling}


Let $G$ be a transient, uniquely absorbing plane network and let $\rho$ be a vertex of $G$. For each $v\in V$ let $y(v)$ denote the probability that the random walk on $G$ started at $v$ never visits $\rho$, and let 
\[ \eta := \sum_{u\sim \rho} c(\rho,u)y(u).\]
Let $\R/\eta\Z$ be the circle of length $\eta$. Then there exists a set \[\cS_\rho=\{ S(e) : e \in E \}\] 
 such that:
\begin{enumerate}
\item For each oriented edge $e$ of $G$ such that $y(e^+)\geq y(e^-)$, $S(e)\subseteq \R/\eta\Z\times[0,1)$ is a rectangle of the form \[ S(e)=I(e) \times \left[y(e^-),y(e^+)\right]\] where $I(e)\subseteq \R/\eta\Z$ is an interval of length \[\mathrm{length}\left(I(e)\right):=c(e)\left(y(e^+)-y(e^-)\right).\] If $e$ is such that $y(e^+) < y(e^-)$, we define $I(e)=I(-e)$ and $S(e)=S(-e)$. In particular, the aspect ratio of $S(e)$ is equal to the conductance $c(e)$ for every edge $e\in E$.
\item The interiors of the rectangles $S(e)$ are disjoint, and the union $\bigcup_e S(e)=\R/\eta\Z\times[0,1)$.
\item For every vertex $v\in V$, the set 
$I(v) = \bigcup_{e^- =v}I(e)$  is an interval and is equal to  $\bigcup_{e^+=v} I(e)$.  
\item For almost every $\theta \in C$ and for every $t\in[0,1)$, the line segment $\{\theta\}\times [0,t]$ intersects only finitely many rectangles of $\cS_\rho$.
\end{enumerate}
Note that the rectangle corresponding to an edge through which no current flows is degenerate, consisting of a single point. 
The existence of the above tiling  was proven by Benjamini and Schramm \cite{BS96b}. Their proof was stated for the case $c\equiv 1$ but extends immediately to our setting, see \cite{G13}.

Let us also note the following property of the rectangle tiling, which follows from the construction given in~\cite{BS96b}.
\begin{enumerate}
\item[(5)]  For each two edges $e_1$ and $e_2$ of $G$, the interiors of the vertical sides of the rectangles $S(e_1)$ and $S(e_2)$ have a non-trivial intersection only if $e_1$ and $e_2$ both lie in the boundary of some common face $f$ of $G$. 
\end{enumerate}

For each $v\in V$, we let $\theta(v)$ be a point chosen arbitrarily from $I(v)$.
\medskip
Let $G$ be a uniquely absorbing proper plane network. 
Benjamini and Schramm~\cite{BS96b} proved that if $G$ has bounded local geometry and $\langle X_n \rangle_{n\geq0}$ is a random walk on $G$ started at $\rho$, then  $\theta(X_n)$ converges to a point in $\R/\eta\Z$ and the law of the limit point is Lebesgue (their proof is given for bounded degree plane graphs but extends immediately to this setting). An observation of Georgakopoulos \cite[Lemma 6.2]{G13} implies that, more generally, whenever $G$ is such that $\theta(X_n)$ converges to a point in $\R/\eta\Z$ and $\mathrm{length}(I(X_n))$ converges to zero almost surely, the law of the limit point is Lebesgue.

\section{The Poisson Boundary}
\label{sec:Poisson}

Let $(V,P)$ be a Markov chain. 
Harmonic functions on $(V,P)$ encode asymptotic behaviours of a trajectory~$\langle X_n \rangle_{n\geq0}$ as follows. Let $\Omega$ denote the path space
\[ \Omega = \left\{ \langle x_i \rangle_{i\geq0} \in V^\N : p(x_i,x_{i+1})>0 \quad \forall i\geq0\right\}\]
and let $\cB$ denote the Borel $\sigma$-algebra for the product topology on $\Omega$. Let $\cI$ denote the invariant $\sigma$-algebra 
\[ \cI = \left\{ \sA \in \cB : \langle x_i \rangle_{i\geq0} \in \sA \iff \langle x_{i+1} \rangle_{i\geq0} \in \sA \quad \forall \langle x_i \rangle_{i\geq 0} \in \Omega\right\}. \]
Assume that there exists a vertex $\rho\in V$ from which all other vertices are reachable: 
\[\forall v \in V \,\, \exists k\geq 0 \text{ such that } p_k(\rho,v)>0\]
which is always satisfied when $P$ is the transition operator of the random walk on a network~$G$.
Then there exists an invertible linear transformation $\mathbf{H}$ between $L^\infty(\Omega,\cI,P_\rho)$ and the space of bounded harmonic functions on $G$ defined as follows \cite[Proposition 14.12]{blackwell1955transient,LP:book}:
\begin{align} \mathbf{H}: f &\longmapsto \mathbf{H}f(v) = E_v\left[f\left(\langle X_n \rangle_{n\geq0}\right)\right]\nonumber\\   \mathbf{H}^{-1}: h &\longmapsto
 \tilde h\left(\langle x_i \rangle_{i\geq0}\right) = \lim_{i\to\infty}h(x_i) \label{eq:H}\end{align}
where the above limit exists for $P_\rho$-a.e.\ sequence $\langle x_i \rangle_{i\geq0}$ by the martingale convergence theorem.

\begin{prop}[Path-hitting criterion for the Poisson boundary]\label{thm:criterion}
Let $(V,P)$ be a Markov chain and let $\langle X_n \rangle_{n\geq0}$ be a trajectory of the Markov chain. Suppose that $\psi:V\to \mathbb{M}$ is a function from $V$ to a metric space $\mathbb{M}$ such that $\psi(X_n)$ converges to a point in $\mathbb{M}$ almost surely. For each $k\geq 0$, let $\langle Z^k_{j} \rangle_{j\geq 0}$ be a trajectory of the Markov chain started at $X_k$ that is conditionally independent of $\langle X_n \rangle_{n\geq0}$ given $X_k$, and let $\P$ denote the joint distribution of $\langle X_n \rangle_{n\geq0}$ and each of the $\langle Z^k_m \rangle_{m\geq0}$.  
Suppose that almost surely, for every path $\langle v_i \rangle_{i\geq0}\in\Omega$ started at $\rho$ such that $\lim_{i\to\infty}\psi(v_i)=\lim_{n\to\infty}\psi(X_n)$, we have that 
\begin{equation}\label{eq:pathassumption}\limsup_{k \to \infty}\P\left(\langle Z^k_m \rangle_{m\geq0} \text{ hits } \{v_i : i\geq 0\} \,\middle|\, \langle X_n \rangle_{n\geq0} \right)>0.\end{equation}
Then for every bounded harmonic function $h$ on $G$, there exists a bounded Borel function $f:\mathbb{M}\to\R$ such that 
\[ h(v) =  E_v\left[f\left(\lim_{n\to\infty} \psi(X_n)\right)\right] \]
for every $v\in V$.
\end{prop}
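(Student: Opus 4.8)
The plan is to show that the invariant $\sigma$-algebra $\cI$ coincides, modulo $P_\rho$-null sets, with the $\sigma$-algebra generated by the boundary map $\psi_\infty:=\lim_{n\to\infty}\psi(X_n)$, which exists by hypothesis. Granting this, the conclusion follows formally from the isomorphism \eqref{eq:H}: given a bounded harmonic $h$, the associated invariant function $\tilde h=\mathbf{H}^{-1}h=\lim_i h(X_i)$ is $\sigma(\psi_\infty)$-measurable, so the Doob--Dynkin lemma produces a bounded Borel $f:\mathbb{M}\to\R$ with $\tilde h=f(\psi_\infty)$ almost surely, and applying $\mathbf{H}$ gives $h(v)=E_v[\tilde h]=E_v[f(\lim_n\psi(X_n))]$ for every $v$. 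Since $\psi_\infty$ is a shift-invariant limit, the inclusion $\sigma(\psi_\infty)\subseteq\cI$ is automatic, so only the reverse inclusion requires proof. As bounded $\cI$-measurable functions correspond exactly to bounded harmonic functions under $\mathbf{H}$, this reverse inclusion is equivalent to the assertion that $\mathrm{Var}(\tilde h\mid\psi_\infty)=0$ for every bounded harmonic $h$; equivalently, two trajectories sharing the same value of $\psi_\infty$ have the same tail limit $\tilde h$.

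The mechanism for establishing this is the hitting hypothesis \eqref{eq:pathassumption}. Fix a bounded harmonic $h$ with $|h|\le 1$. For each $k$, harmonicity together with the conditional independence of $\langle Z^k_m\rangle$ and $\langle X_n\rangle$ given $X_k$ shows that $\langle h(Z^k_m)\rangle_m$ is a bounded martingale with conditional mean $h(X_k)$; hence its tail limit $\tilde h^k:=\lim_m h(Z^k_m)$ satisfies $E[\tilde h^k\mid\langle X_n\rangle]=h(X_k)$, and $h(X_k)\to\tilde h$ as $k\to\infty$ by martingale convergence. Now introduce a second trajectory $\langle Y_n\rangle$ started at $\rho$, independent of everything else, and work on the event that $Y$ shares the boundary point of $X$, i.e.\ $\lim_n\psi(Y_n)=\psi_\infty$. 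Conditioning on $\langle X_n\rangle$ and applying \eqref{eq:pathassumption} with the (random, but conditionally admissible) target path $\langle v_i\rangle=\langle Y_i\rangle$, the auxiliary walks $\langle Z^k_m\rangle$ hit the trace of $Y$ with asymptotically positive probability. On the meeting event the strong Markov property lets us couple the continuation of $Z^k$ after its first visit to $\{Y_i\}$ with the corresponding continuation of $Y$, without altering the conditional law of $Z^k$ given $X_k$; under this coupling the two tails agree, so that $\tilde h^k=\tilde h^Y$ on the meeting event, where $\tilde h^Y:=\lim_n h(Y_n)$.

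Combining these ingredients through a second-moment computation is the crux of the argument and the step I expect to be the main obstacle. The identity $E[\tilde h^k\mid\langle X_n\rangle]=h(X_k)\to\tilde h$ pins the conditional mean of $\tilde h^k$ to the tail limit of $X$, while the coupling forces $\tilde h^k$ to coincide with the independent tail limit $\tilde h^Y$ on an event of asymptotically positive probability; since, conditionally on $\psi_\infty$, the limits $\tilde h$ and $\tilde h^Y$ are independent and identically distributed, the only way both constraints can persist as $k\to\infty$ is for the conditional variance of $\tilde h$ given $\psi_\infty$ to vanish. Making this precise requires careful bookkeeping: converting the $\limsup$ over $k$ in \eqref{eq:pathassumption} (rather than a genuine limit) into a quantitative lower bound, justifying via a conditioning/Fubini argument that the ``for every path'' clause may legitimately be applied to the realized trace of the independent walk $Y$, and controlling the contribution of the non-meeting event in the second-moment estimate. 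Once $\mathrm{Var}(\tilde h\mid\psi_\infty)=0$ has been established for every bounded harmonic $h$, the reduction of the first paragraph completes the proof.
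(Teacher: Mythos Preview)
Your reduction in the first paragraph is correct and matches the paper's setup. The proposed mechanism, however, has a genuine gap. You introduce an independent walk $\langle Y_n\rangle$ and ``work on the event $\lim_n\psi(Y_n)=\psi_\infty$'', but in the cases of interest the law of $\psi_\infty$ is non-atomic, so this event has probability zero and nothing can be concluded from it; applying \eqref{eq:pathassumption} to the realized trajectory of $Y$ therefore requires a disintegration argument you have not supplied. You flag this as bookkeeping, but it is the heart of the matter. The coupling step is also not right as stated: after $Z^k$ hits $\{Y_i\}$ at some $Y_J$, the strong Markov property restarts $Z^k$ as a fresh walk from $Y_J$, whereas $\langle Y_{J+m}\rangle_{m\ge0}$ is the \emph{already realized} path you have conditioned on, not a fresh walk, so one cannot couple the two without altering the joint law and hence destroying the independence you need later. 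What optional stopping legitimately yields is only $h(X_k)\ge\P(Z^k\text{ hits }\{Y_i\}\mid X,Y)\cdot\inf_i h(Y_i)$, not $\tilde h^k=\tilde h^Y$.

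The paper avoids both issues by never introducing a second random walk. For an invariant event $\sA$ with $h(v)=P_v(\sA)$, it \emph{constructs} $\sB\subseteq\mathbb{M}$ as the set of $x$ for which there exists a path $\langle v_i\rangle$ from $\rho$ with $\psi(v_i)\to x$ (at a controlled rate, inserted only to make $\sB$ Borel) and $\inf_i h(v_i)>0$. On $\sA$ one has $h(X_n)\to1$ by L\'evy's $0$--$1$ law, so the walk itself witnesses $\psi_\infty\in\sB$. Conversely, on $\{\psi_\infty\in\sB\}$ one fixes a \emph{deterministic} witnessing path $\langle v_i\rangle$ and applies optional stopping to get $h(X_k)\ge\P(Z^k\text{ hits }\{v_i\}\mid X)\cdot\inf_i h(v_i)$; since $\langle v_i\rangle$ is deterministic and converges to $\psi_\infty$, hypothesis \eqref{eq:pathassumption} applies directly and gives $\limsup_k h(X_k)>0$, hence $\sA$. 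The use of a deterministic target path is precisely what makes \eqref{eq:pathassumption} applicable without any null-set conditioning.
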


Note that it suffices to define $f$ on the support of the law of $\lim_{n\to\infty}\psi(X_n)$, which is contained in the set of accumulation points of $\{\psi(v) : v\in V\}$.

 A consequence of the correspondence \eqref{eq:H} is that, to prove \cref{thm:criterion}, it suffices to prove that  for every invariant event $\sA\in\cI$, there exists a Borel set $\sB \subseteq \mathbb{M}$ such that
\[P_v\left(\sA\triangle\big\{\lim_{n\to\infty}\psi(X_n) \in \sB\big\}\right)=0 \text{ for every vertex $v\in V$.}\]

\begin{proof}[Proof of \cref{thm:criterion}]
Let $\sA\in\cI$ be an invariant event and  let $h$ be the harmonic function
\[ h(v):=P_v(\langle X_n \rangle_{n\geq0}\in\sA).\] 
L\'evy's 0-1 law implies that 
\[h(X_n) \xrightarrow[n\to\infty]{a.s.}\mathbbm{1}\left(\langle X_n \rangle_{n\geq0} \in \sA\right)\]
and so it suffices to exhibit a Borel set $\sB\subseteq \mathbb{M}$ such that
 \begin{multline*}P_\rho\left(\sA\triangle\big\{\lim_{n\to\infty}\psi(X_n)\in\sB\big\}\right)\\=P_\rho\left(\big\{\limsup_{n\to\infty} h(X_n)>0\big\}\triangle\big\{\lim_{n\to\infty}\psi(X_n)\in\sB\big\}\right)=0.\end{multline*}
We may assume that $P_\rho(\sA)>0$, otherwise the claim is trivial.

\medskip

Let $d_\mathbb{M}$ denote the metric of $\mathbb{M}$. 
For each natural number $m>0$, let $N(m)$ be the smallest natural number such that \[P_\rho\left(\exists n \geq N(m) \text{ such that }d_{\mathbb{M}}\left(\psi(X_n),\lim_{k\to\infty} \psi(X_k)\right)\geq\frac{1}{m} \right)\leq 2^{-m}.\]
For each $n$, let $m(n)$ be the largest $m$ such that $n\geq N(m)$, so that $m(n)\to\infty$ as $n\to\infty$. Borel-Cantelli implies that, for all but finitely many $m$, there does not exist an $n \geq N(m)$ such that 
\[  d_{\mathbb{M}}\left(\psi(X_n),\lim_{k\to\infty} \psi(X_k)\right) > \frac{1}{m}, 
\]
and it follows that
\[ d_{\mathbb{M}}\left(\psi(X_n),\lim_{k\to\infty} \psi(X_k)\right) \leq \frac{1}{m(n)} \]
for all but finitely many $n$ almost surely.

 Define the set $\sB\subseteq \mathbb{M}$ by
\[ \hspace{-.3em}\sB :=\left\{x \in \mathbb{M} : \begin{array}{l} \exists \text{ a path }\langle v_i \rangle_{i\geq0} \text{ in $G$ with $v_0=\rho$ such} \text{ that } d_{\mathbb{M}}(\psi(v_i),x)\leq 1/m(i)\\ \text{for all but} \text{ finitely } \text{many } i \text{ and } \inf_{i\geq 0}h(v_i)>0\end{array}\right\}.\]
To see that $\sB$ is Borel, observe that it may be written in terms of closed subsets of $\mathbb{M}$ as follows 
\[\sB = \bigcup_{k\geq0}\bigcup_{j\geq0}\bigcap_{I\geq j} \left\{x\in \mathbb{M} : \begin{array}{l} \exists \text{ a path }\langle v_i \rangle_{i=0}^I \text{ in $G$ with $v_0=\rho$} \text{ such that}\\ d_{\mathbb{M}}(\psi(v_i),x)\leq 1/m(i) \text{ for all } j \leq i \leq I \text{ and}\\ h(v_i)\geq 1/k \text{ for all } 0 \leq i \leq I \end{array}\right\}.\]
It is immediate that $\lim_{n\to\infty}\psi(X_n)\in\sB$ almost surely on the event that $h(X_n)$ converges to 1: simply take  $\langle v_i \rangle_{i\geq0}=\langle X_i \rangle_{i\geq 0}$ as the required path. In particular, the event $\{\lim_{n\to\infty}\psi(X_n)\in\sB\}$ has positive probability.  

We now prove conversely that $\liminf_{n\to\infty} h(X_n)>0$ almost surely on the event that $\lim_{n\to\infty}\psi(X_n)\in\sB$.
Condition on this event, so that there exists a path $\langle v_i \rangle_{i\geq0}$ in $G$ starting at $\rho$ such that $\lim_{i\to\infty}\psi(v_i) = \lim_{n\to\infty}\psi(X_n)$ and $\inf_{i\geq0} h(v_i) >0$. Fix one such path. 
Applying the optional stopping theorem to $\langle h(Z^k_m) \rangle_{m\geq0}$, we have
\begin{align*} h(X_k) &\geq \P\left(\langle Z^k_m \rangle_{m\geq0} \text{ hits } \{v_i : i\geq 0\} \,\middle|\, \langle X_n \rangle_{n\geq0} \right) \cdot \inf\{h(v_i) : i\geq 0\} \end{align*}
and so, by our assumption \eqref{eq:pathassumption}, we have that
\[ \limsup_{k\to\infty} h(X_k) \geq \limsup_{k\to\infty}\P\left(\langle Z^k_m \rangle_{m\geq0} \text{ hits } \{v_i : i\geq 0\} \,\middle|\, \langle X_n \rangle_{n\geq0} \right) \cdot \inf\{h(v_i) : i\geq 0\}\]
is positive almost surely.
\end{proof}

We remark that controlling the rate of convergence of the path in the definition of $\sB$ can be avoided by invoking the theory of universally measurable sets.

We now apply the criterion given by \cref{thm:criterion} to prove \cref{Thm:generalPoisson} and \cref{thm:roughembeddings}.

\begin{proof}[Proof of \cref{Thm:generalPoisson}]
Let $\langle X_n \rangle_{n\geq0}$ and $\langle Y_n \rangle_{n\geq0}$ be independent random walks on $G$ started at $\rho$, and for each $k\geq 0$ let $\langle Z^k_{j} \rangle_{j\geq 0}$ be a random walk on $G$ started at $X_k$ that is conditionally independent of $\langle X_n \rangle_{n\geq0}$ and $\langle Y \rangle_{n\geq0}$ given $X_k$.   Let $\P$ denote the joint distribution of $\langle X_n \rangle_{n\geq0}$, $\langle Y_n \rangle_{n\geq0}$ and all of the random walks $\langle Z^k_m \rangle_{m\geq0}$.  
Given two points $\theta_1,\theta_2 \in \R/\eta\Z$, we denote by $(\theta_1,\theta_2)\subset \R/\eta\Z$ the open arc between $\theta_1$ and $\theta_2$  in the counter-clockwise direction. For each such interval $(\theta_1,\theta_2)\in \R/\eta\Z$, let
\[q_{(\theta_1,\theta_2)}(v):=P_v\left(\lim_{n\to\infty}\theta(X_n)\in(\theta_1,\theta_2)\right).\]
be the probability that a random walk started at $v$ converges to a point in the interval~$(\theta_1,\theta_2)$.

Since the law of $\lim_{n\to\infty}\theta(X_n)$ is Lebesgue and hence non-atomic, the two random variables $\theta^+:=\lim_{n\to\infty}\theta(X_n)$ and $\theta^-:=\lim_{n\to\infty}\theta(Y_n)$ are almost surely distinct. We can therefore write $\R/\eta\Z\setminus\{\theta^+,\theta^-\}$ as the union of the two disjoint non-empty intervals $\R/\eta\Z\times\{1\} \setminus \{\theta^+,\theta^-\}=(\theta^+,\theta^-)\cup (\theta^-,\theta^+)$. Let 
\[Q_k:=q_{(\theta^-,\theta^+)}(X_k)= \P\left(\lim_{m\to\infty} \theta(Z^k_m) \in (\theta^-,\theta^+) \,\middle|\, \langle X_n \rangle_{n\geq0}, \langle Y_n \rangle_{n\geq0} \right)\] be the probability that a random walk started at $X_k$, that is conditionally independent of $\langle X_n \rangle_{n\geq0}$ and $\langle Y_n \rangle_{n\geq0}$ given $X_k$, converges to a point in the interval~$(\theta^-,\theta^+)$. 

We claim that the random variable $Q_k$ is uniformly distributed on $[0,1]$ conditional on $\langle X_n \rangle_{n=0}^k$ and $\langle Y_n \rangle_{n\geq0}$. Indeed, since the law of $\theta^+$ given $X_k$ is non-atomic, for each $s\in[0,1]$ there exists\footnote{In the present setting, $\theta^s$ is unique since the ;aw of $\theta^+$ given $X_k$ has full support.} $\theta^s=\theta^s(X_k,\theta^-)\in\R/\eta\Z$ such that 
\[\P\left(\theta^+\in(\theta^-,\theta^s)\,\middle|\, X_k,\theta^-\right)=s.\]
 The claim follows by observing that
\[\P\left(Q_k \in [0,s] \,\middle|\, \langle X_n \rangle_{n=0}^k,\langle Y_n \rangle_{n\geq0}\right) = \P\left(\theta^+\in(\theta^-,\theta^s)\mid X_k,\theta^-\right) = s. \]
As a consequence, Fatou's lemma implies that for every $\eps>0$,
\begin{align*}\P(Q_k \in [\eps,1-\eps] \text{ infinitely often}) &= \E\left[\limsup_{k\to\infty}\mathbbm{1}\left(Q_k\in[\eps,1-\eps]\right)\right]\\
&\geq \limsup_{k\to\infty}\P\left(Q_k \in [\eps,1-\eps]\right) = 1-2\eps.\end{align*}
 and so 
 \begin{equation}\limsup_{k\to\infty} \min\{Q_k,1-Q_k\}>0 \text{  almost surely.}\label{eq:Qk}\end{equation}

\begin{figure}[t]
\centering
\includegraphics[width=0.6\textwidth]{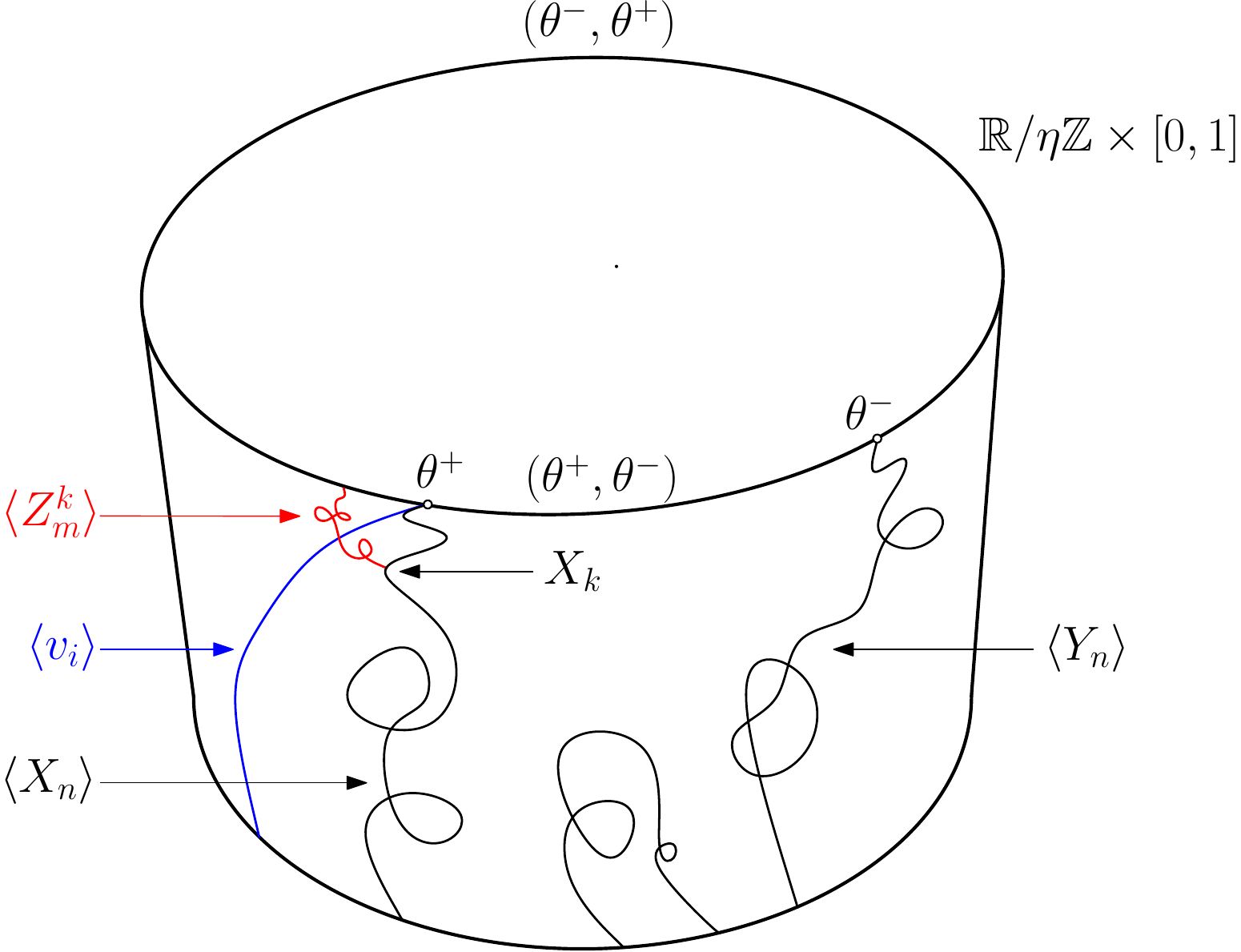}
\caption{Illustration of the proof. Conditioned on the random walk $\langle X_n \rangle_{n\geq0}$, there exists a random $\eps>0$ such that almost surely, for infinitely many $k$, a new random walk $\langle Z^k_m \rangle_{m\geq0}$ (red) started from $X_k$ has probability at least $\eps$ of hitting the path $\langle v_i \rangle_{i\geq0}$ (blue).}
\end{figure}

Let $\langle v_i \rangle_{i\geq0}$ be a path in $G$ started at $\rho$ such that $\lim_{i\to\infty}\theta(v_i)=\theta^+$ and the height $y(v_i)$ tends to $1$. 
Observe (Figure 1) that for every $k\geq 0$, the union of the traces $\{ v_i : i \geq 0\}\cup\{ Y_n : n\geq 0\}$ either contains $X_k$ or disconnects $X_k$ from at least one of the two intervals $(\theta^-,\theta^+)$ or $(\theta^+,\theta^-)$. That is, for at least one of the intervals $(\theta^-,\theta^+)$ or $(\theta^+,\theta^-)$, any path in $G$ started at $X_k$ that converges to this interval must intersect $\{ v_i : i \geq 0\}\cup\{ Y_n : n\geq 0\}$. It follows that
\begin{equation*}\P\left(\langle Z^k_m \rangle_{m\geq0} \text{ hits } \{v_i : i\geq 0\}\cup\{ Y_n :n\geq 0\} \,\middle|\, \langle X_n \rangle_{n\geq0},\, \langle Y_n\rangle_{n\geq0} \right) \geq \min\{Q_k,1-Q_k\}.\end{equation*}
and so, applying \eqref{eq:Qk},
\begin{equation}\label{eq:Poisson2}\limsup_{k\to\infty}\P\left(\langle Z^k_m \rangle_{m\geq0} \text{ hits } \{v_i : i\geq 0\}\cup\{ Y_n :n\geq 0\} \,\middle|\, \langle X_n \rangle_{n\geq0},\, \langle Y_n\rangle_{n\geq0} \right) >0\end{equation}
almost surely.
We next claim that
\begin{equation}\label{eq:Poisson1}\P\left(\langle Z^k_m \rangle_{m\geq0} \text{ hits } \{ Y_n :n\geq 0\} \,\middle|\, \langle X_n \rangle_{n\geq0},\, \langle Y_n\rangle_{n\geq0} \right) \xrightarrow[k\to\infty]{a.s.}0.\end{equation}
Indeed, we have that
\begin{multline*}\P\left(\langle Z^k_m \rangle_{m\geq0} \text{ hits } \{ Y_n :n\geq 0\} \,\middle|\, \langle X_n \rangle_{n\geq0},\, \langle Y_n\rangle_{n\geq0} \right)\\ = \P\left(\langle Z^k_m \rangle_{m\geq0} \text{ hits } \{ Y_n :n\geq 0\} \,\middle|\, \langle X_n \rangle_{n=0}^k,\, \langle Y_n\rangle_{n\geq0} \right)\\
= \P\left(\langle X_m \rangle_{m\geq k} \text{ hits } \{ Y_n :n\geq 0\} \,\middle|\, \langle X_n \rangle_{n=0}^k,\, \langle Y_n\rangle_{n\geq0} \right). \end{multline*}
The rightmost expression converges to zero almost surely by an easy application of L\'evy's 0-1 law:  For each $k_0 \leq k$, we have that
\begin{multline*}\P\left(\langle Z^k_m \rangle_{m\geq0} \text{ hits } \{ Y_n :n\geq 0\} \,\middle|\, \langle X_n \rangle_{n\geq0},\, \langle Y_n\rangle_{n\geq0} \right)\\ \leq \P\left(\langle X_m \rangle_{m\geq k_0} \text{ hits } \{ Y_n :n\geq 0\} \,\middle|\, \langle X_n \rangle_{n=0}^k,\, \langle Y_n\rangle_{n\geq0} \right) 
\\ \xrightarrow[k\to\infty]{a.s.} \mathbbm{1}(\langle X_m \rangle_{m\geq k_0} \text{ hits } \{ Y_n :n\geq 0\}), \end{multline*}
and the claim follows since $\mathbbm{1}(\langle X_m \rangle_{m\geq k_0} \text{ hits } \{ Y_n :n\geq 0\})\to0$ a.s.\ as $k_0\to\infty$.


Combining \eqref{eq:Poisson2} and \eqref{eq:Poisson1}, we deduce that
\begin{multline*}\limsup_{k \to \infty}\P\left(\langle Z^k_m \rangle_{m\geq0} \text{ hits } \{v_i : i\geq 0\} \,\middle|\, \langle X_n \rangle_{n\geq0} \right)\\
=\limsup_{k \to \infty}\P\left(\langle Z^k_m \rangle_{m\geq0} \text{ hits } \{v_i : i\geq 0\} \,\middle|\, \langle X_n \rangle_{n\geq0},\, \langle Y_n \rangle_{n\geq0} \right)>0\end{multline*}
 almost surely. Applying \cref{thm:criterion} with $\psi=(\theta,y):V\to \R/\eta\Z \times[0,1]$ completes the proof. 
\end{proof} 

\subsection{Proof of \cref{thm:roughembeddings}}
\begin{proof}
Let $G=(V,E)$ be a transient network with bounded local geometry such that there exists a proper plane graph $G'=(V',G')$ with bounded degrees and a rough isometry $\phi: V \to V'$. Then $G'$ is also transient \cite[\S2.6]{LP:book}. We may assume that $G'$ is simple, since $\phi$ remains a rough isometry if we modify $G'$ by deleting all self-loops and identifying all multiple edges between each pair of vertices. In this case, there exists a simple, bounded degree, proper plane triangulation $T'$ containing $G'$ as a subgraph \cite[Lemma 4.3]{BS96a}. The triangulation $T'$ is transient by Rayleigh monotonicty, and since it has bounded degrees it is CP hyperbolic by the He-Schramm Theorem. Let $\cC$ be a circle packing of $T'$ in the disc $\D$ and let $z$ be the embedding of $G'$ defined by this circle packing, so that for each $v\in V'$, $z(v')$ is the center of the circle of $\cC$ corresponding to $v'$, and each edge  of $G'$ is embedded as a straight line between the centers of the circles corresponding to its endpoints.

Recall that the \textbf{Dirichlet energy} of a function $f:V\to\R$ is defined by 
\[\cE_G(f)=\frac{1}{2}\sum_e c(e)\left|f(e^+)-f(e^-)\right|^2.\]

The following lemma is implicit in \cite{Soardi93}.

\begin{lem}\label{lem:roughenergy}
Let $\phi: V \to V'$ be a rough isometry from a network $G$ with bounded local geometry to a network $G'$ with bounded local geometry. Then there exists a constant $C$ such that
\begin{equation}\label{eq:roughenergy} \cE_G(f\circ\phi)\leq C\cE_{G'}(f).\end{equation}
for all functions $f:V'\to \R$.
\end{lem}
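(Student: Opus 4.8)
The plan is to bound each edge-contribution to $\cE_G(f\circ\phi)$ by the energy of $f$ along a short path in $G'$, and then to show that no single edge of $G'$ is used by too many of these paths. First I would expand
\[\cE_G(f\circ\phi)=\frac12\sum_{e\in E}c(e)\,\bigl|f(\phi(e^+))-f(\phi(e^-))\bigr|^2\]
and use the upper conductance bound $c(e)\le M$ to discard the conductances. For each edge $e$ of $G$ the endpoints satisfy $d(e^-,e^+)=1$, so the rough-isometry upper bound gives $d'(\phi(e^-),\phi(e^+))\le \alpha+\beta=:L$. Hence I may fix, for each $e$, a geodesic $\gamma_e$ in $G'$ from $\phi(e^-)$ to $\phi(e^+)$ of length at most $L$, and Cauchy--Schwarz yields
\[\bigl|f(\phi(e^+))-f(\phi(e^-))\bigr|^2\le L\sum_{e'\in\gamma_e}\bigl|f((e')^{+})-f((e')^{-})\bigr|^2.\]

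The crux is then to interchange the order of summation and bound the multiplicity $\kappa(e'):=\#\{e\in E: e'\in\gamma_e\}$ uniformly over $e'\in E'$; this is the step I expect to be the main obstacle, as everything else is routine. To control $\kappa(e')$, observe that if $e'\in\gamma_e$ then, fixing an endpoint $x'$ of $e'$, the point $\phi(e^-)$ lies within distance $L$ of $x'$ along $\gamma_e$. By almost surjectivity there is $u_0\in V$ with $d'(\phi(u_0),x')\le\beta$, so the lower bound in the rough-isometry definition gives
\[d(e^-,u_0)\le \alpha\bigl(d'(\phi(e^-),\phi(u_0))+\beta\bigr)\le\alpha(L+2\beta)=:R.\]
Thus every edge $e$ contributing to $\kappa(e')$ has an endpoint inside the ball $B_G(u_0,R)$. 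Since $G$ has bounded degrees, $|B_G(u_0,R)|\le M^{R+1}$ and each vertex meets at most $M$ edges (counting multiplicities and self-loops, which are also bounded in number by $M$), so $\kappa(e')\le M^{R+2}=:K$, a constant depending only on $M,\alpha,\beta$.

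Finally I would assemble the estimate by interchanging the summation:
\[\cE_G(f\circ\phi)\le\frac{ML}{2}\sum_{e\in E}\sum_{e'\in\gamma_e}\bigl|f((e')^{+})-f((e')^{-})\bigr|^2\le\frac{MLK}{2}\sum_{e'\in E'}\bigl|f((e')^{+})-f((e')^{-})\bigr|^2.\]
Using the lower conductance bound $c(e')\ge M^{-1}$ in $G'$ to reintroduce the conductances gives $\sum_{e'\in E'}|f((e')^{+})-f((e')^{-})|^2\le 2M\,\cE_{G'}(f)$, and hence $\cE_G(f\circ\phi)\le C\,\cE_{G'}(f)$ with $C=M^2LK$. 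The only point worth double-checking is that the geodesics $\gamma_e$ may be chosen measurably/arbitrarily and that multiple edges and self-loops of $G$ are correctly absorbed into the multiplicity count, but both are immediate consequences of bounded local geometry.
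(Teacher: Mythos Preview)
Your proof is correct and follows essentially the same route as the paper's: fix a short path in $G'$ for each edge of $G$, apply Cauchy--Schwarz along it, bound the multiplicity of each $e'\in E'$ by showing all contributing edges of $G$ lie in a ball of bounded radius, and then reinsert the conductance lower bound on $G'$. The only cosmetic difference is that you invoke almost surjectivity to fix a reference vertex $u_0$ for the ball, whereas the paper compares any two contributing edges directly to get $d(e_1^-,e_2^-)\le\alpha(2\alpha+3\beta)$; both arguments give the same conclusion.
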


For each pair of adjacent vertices $u$ and $v$ in $G$, we have that $d'(\phi(u),\phi(v))\leq \alpha + \beta$, and so there exists a path in $G'$ from $\phi(u)$ to $\phi(v)$ of length at most $\alpha+\beta$. Fix one such path $\Phi(u,v)$ for each $u$ and $v$. 

\begin{proof} Since there at most $\alpha+\beta$ edges of $G$ in each path $\Phi(u,v)$, and the conductances of $G$ are bounded, there exists a constant $C_1$ such that
\begin{align}\cE_G(f\circ\phi) &= \sum_e c(e)\left(f\circ\phi(e^+)-f\circ\phi(e^-)\right)^2 = \sum_e c(e)\Big(\sum_{e'\in\Phi(e)}f(e'^+)-f(e'^-)\Big)^2 \nonumber\\
&\leq C_1 \sum_e \sum_{e'\in\Phi(e)}\left(f(e'^+)-f(e'^-)\right)^2.\end{align}
Let $e'$ be an edge of $G'$. If $e_1$ and $e_2$ are two edges of $G$ such that $\Phi(e_1^-,e_1^+)$ and $\Phi(e_1^-,e_1^+)$ both contain $e'$, then  $d'(\phi(e_1^-),\phi(e_2^-))\leq 2(\alpha+\beta)$ and so 
\begin{align*}d(e_1^-,e_2^-)\leq \alpha\left(d'\left(\phi(e_1^-),\phi(e_2^-)\right)+\beta\right) \leq \alpha(2\alpha+3\beta).\end{align*}
Thus, the set of edges $e$ of $G$ such that $\Phi(e^-,e^+)$ contains $e'$ is contained a ball of radius $\alpha(2\alpha+3\beta)$ in $G$.
Since $G$ has bounded degrees, the number of edges contained in such a ball is bounded by some constant $C_2$. Combining this with the assumption that the conductances of $G'$ are bounded below by some constant $C_3$, we have that
\begin{align*}
\cE_G(f\circ\phi) &\leq C_1 \sum_e \sum_{e'\in\Phi(e)}\left(f(e'^+)-f(e'^-)\right)^2\\ &\leq C_1 C_2 \sum_{e'}\left(f(e'^+)-f(e'^-)\right)^2 \leq C_1C_2C_3 \cE_{G'}(f).\qedhere
\end{align*}
\end{proof}

The proofs of Lemmas \ref{lem:convergence} and \ref{lem:noatoms}  below are adapted from \cite{ABGN14}. 
\begin{lem}\label{lem:convergence} Let $\langle X_n \rangle_{n\geq0}$ denote the random walk on $G$. Then $z\circ\phi(X_n)$ converges to a point in $\partial \D$ almost surely.
\end{lem}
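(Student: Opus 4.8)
The plan is to show that each of the two coordinate functions of the embedding $z$ pulls back under $\phi$ to a function of finite Dirichlet energy on $G$, and then to invoke the almost-sure convergence of finite-energy (Dirichlet) functions along the trajectory of a transient random walk. Write $z=(z_1,z_2):V'\to\D$, where $z(v)$ is the centre of the circle $C_v$ of the packing $\cC$ corresponding to $v$, and let $r_v$ denote its radius.

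First I would bound the energy of the coordinate functions on $G'$. Since adjacent circles of $\cC$ are tangent, $|z(e^+)-z(e^-)|=r_{e^+}+r_{e^-}$ for every edge $e$ of $T'$, and since $G'$ is a subgraph of $T'$ we obtain
\[
\cE_{G'}(z_1)+\cE_{G'}(z_2)=\tfrac12\sum_{e\in E(G')}\big|z(e^+)-z(e^-)\big|^2
\le \tfrac12\sum_{e\in E(T')}\big(r_{e^+}+r_{e^-}\big)^2
\le \sum_{v\in V'}\deg_{T'}(v)\,r_v^2.
\]
Because the circles of $\cC$ are disjoint subsets of $\D$ we have $\pi\sum_v r_v^2\le \mathrm{area}(\D)=\pi$, so $\sum_v r_v^2\le1$; combined with the degree bound on $T'$ this shows that $\cE_{G'}(z_1)$ and $\cE_{G'}(z_2)$ are finite. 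Applying \cref{lem:roughenergy} to the rough isometry $\phi$ then yields $\cE_G(z_i\circ\phi)\le C\,\cE_{G'}(z_i)<\infty$ for $i=1,2$, so each coordinate of $z\circ\phi$ is a bounded function of finite Dirichlet energy on $G$.

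Next I would apply the theorem of Ancona, Lyons and Peres that a function of finite Dirichlet energy converges almost surely along the trajectory of a transient reversible random walk, which is proved via crossing estimates that control, for each interval $[a,b]$, the expected number of upcrossings of $[a,b]$ by $f(X_n)$ in terms of $\cE(f)/(b-a)^2$. Since $G$ is transient and each $z_i\circ\phi$ has finite energy, $z_i\circ\phi(X_n)$ converges almost surely for $i=1,2$, and hence $z\circ\phi(X_n)$ converges almost surely to a point of $\overline{\D}$. Finally I would verify that the limit lies in $\partial\D$ rather than in the interior: as $G$ is transient the walk visits each vertex finitely often, so $d(X_n,\rho)\to\infty$, and the rough isometry forces $d'(\phi(X_n),\phi(\rho))\ge \alpha^{-1}d(X_n,\rho)-\beta\to\infty$, so $\phi(X_n)$ eventually leaves every finite subset of $G'$. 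Since the circle packing of the infinite triangulation $T'$ is locally finite in $\D$, every compact subset of $\D$ contains only finitely many centres $z(v)$, whence $z\circ\phi(X_n)$ eventually exits every compact subset of $\D$; together with the almost-sure convergence this places the limit in $\partial\D$.

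I expect the substantial ingredient to be the convergence input of the third paragraph: the energy bound for the circle packing and its transfer through $\phi$ are routine once \cref{lem:roughenergy} is in hand, whereas the almost-sure convergence of a finite-energy function along the walk is genuinely delicate. In particular the naive approach of bounding $\E_\rho\big[\sum_n |f(X_{n+1})-f(X_n)|^2\big]$ fails, since this quantity equals $\sum_x G(\rho,x)\,c(x)^{-1}\sum_y c(x,y)(f(y)-f(x))^2$ and the factor $G(\rho,x)/c(x)$ involving the Green function need not be bounded; the convergence must instead be supplied by the crossing-estimate argument, which is the crux of this step.
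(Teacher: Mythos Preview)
Your proof is correct and follows essentially the same approach as the paper: bound the Dirichlet energy of the coordinates of $z$ on $G'$ via the circle-packing geometry (tangency gives $|z(e^+)-z(e^-)|=r_{e^+}+r_{e^-}$, then use the area bound $\sum r_v^2\le 1$ and the degree bound), transfer this to $G$ via \cref{lem:roughenergy}, and invoke Ancona--Lyons--Peres. You additionally verify that the limit lies in $\partial\D$ rather than in the open disc, a point the paper's proof states but leaves implicit.
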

\begin{proof} 
Ancona, Lyons and Peres \cite{AnLyPe99} proved for every function $f$ of finite Dirichlet energy on a transient network $G$, the sequence $f(X_n)$ converges almost surely as $n\to\infty$. Thus, it suffices to prove that each coordinate of $z\circ\phi$ has finite Dirichlet energy. Applying the inequality \eqref{eq:roughenergy}, it suffices to prove that each coordinate of $z$ has finite energy. For each vertex $v'$ of $G'$, let $r(v')$ denote the radius of the circle corresponding to $v'$ in $\cC$. Then, letting $z=(z_1,z_2)$,
\begin{align*} \cE_{G'}(z_1) + \cE_{G'}(z_2) &= \sum_{e'}\left|z(e^+)-z(e^-)\right|^2= C\sum_{e'}\left(r(e^+)+r(e^-)\right)^2
\\
&\leq 2\max\left(\deg(v')\right)C\sum_{v'}r(v)^2 \leq 2C\max\left(\deg(v')\right)<\infty \end{align*}
since $\sum \pi r(v)^2\leq\pi$ is the total area of all the circles in the packing.
\end{proof}

\begin{lem}\label{lem:noatoms}
The law of $\lim_{n\to\infty}z\circ\phi(X_n)$ does not have any atoms.
\end{lem}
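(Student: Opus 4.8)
The plan is to prove the equivalent quantitative statement that, writing $\mu_\rho$ for the law of $\xi_X:=\lim_n z\circ\phi(X_n)$, one has $\mu_\rho\big(B(\xi,r)\cap\partial\D\big)\to 0$ as $r\to 0$ for every fixed $\xi\in\partial\D$; since $\mu_\rho(\{\xi\})=\lim_{r\to0}\mu_\rho(\overline{B(\xi,r)}\cap\partial\D)$ by continuity of measure from above, this is exactly the absence of an atom at $\xi$. It is worth keeping in mind the probabilistic reformulation: for two \emph{independent} walks $X,Y$ started at $\rho$ one has $\P(\xi_X=\xi_Y)=\sum_{w}\mu_\rho(\{w\})^2$, the sum being over atoms, so non-atomicity is equivalent to the assertion that two independent walks almost surely converge to \emph{distinct} boundary points. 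This reformulation makes clear that a purely topological argument cannot suffice: a single trajectory, interpolated to a curve in $\D$ ending at $\xi_X$, does not separate the disc, and two curves ending at a \emph{common} boundary point degenerate precisely at that point, so the separation arguments used elsewhere in the paper break down exactly in the case one wants to exclude. Genuine geometric input --- the shrinking of circle radii towards $\partial\D$ --- is therefore needed.

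First I would pass from the rough-isometric walk on $G$ to a genuine path in the triangulation $T'$. Since consecutive vertices $\phi(X_n),\phi(X_{n+1})$ lie at $G'$-distance at most $\alpha+\beta$, I concatenate the fixed bounded-length connecting paths $\Phi(X_n,X_{n+1})$ used in \cref{lem:roughenergy} to obtain a path $\widehat X$ in $T'$ passing through the vertices $\phi(X_n)$. By the Ring Lemma of Rodin and Sullivan the radii of tangent circles in the bounded-degree packing $\cC$ are comparable, so the radii along each bounded detour are comparable to $r_{\phi(X_n)}$; as $z\circ\phi(X_n)\to\xi_X\in\partial\D$ the centres converge to the boundary and hence $r_{\phi(X_n)}\to0$. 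Consequently the image curve $z(\widehat X)$ converges to the same point $\xi_X$, and it crosses any Euclidean annulus centred at a boundary point only a bounded factor more often than the bare sequence $z\circ\phi(X_n)$. This reduces everything to a crossing estimate for an honest path in the embedded triangulation.

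The heart of the argument --- adapted from \cite{ABGN14} --- is a discrete Beurling-type estimate. Fix $\xi$ and consider the dyadic annuli $A_k=\big(B(\xi,2^{-k})\setminus B(\xi,2^{-k-1})\big)\cap\D$. The circles of $\cC$ meeting the crosscut $\partial B(\xi,2^{-k})\cap\D$ form a cutset $\Pi_k$ in $T'$ separating the part of the packing deep inside $B(\xi,2^{-k-1})$ from the root; by the Ring Lemma these cutsets are chains of circles of comparable-to-neighbour radii whose diameters sum to a bounded multiple of $2^{-k}$, so each annulus $A_k$ has conformal modulus bounded below. The event $\{\xi_X\in B(\xi,2^{-K})\}$ forces the walk to remain eventually inside $\Pi_K$, and hence to cross all of the disjoint annuli $A_1,\dots,A_K$; a Nash--Williams / effective-resistance estimate across each annulus gives a uniform probability, bounded away from zero, of being swept back outward before completing the next inward crossing, whence $\mu_\rho\big(B(\xi,2^{-K})\cap\partial\D\big)\le C\lambda^{K}$ for some $\lambda<1$. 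Letting $K\to\infty$ yields $\mu_\rho(\{\xi\})=0$, and since $\xi$ was arbitrary the limit law has no atoms.

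I expect the Beurling-type crossing estimate of the third paragraph to be the main obstacle: one must verify that the cutsets $\Pi_k$ genuinely separate the relevant portions of the packing, control their geometry through the Ring Lemma uniformly over $\xi$ and $k$, and then convert the lower bounds on the moduli of the annuli into the geometric decay of the harmonic measure, all while keeping track of the bounded multiplicity introduced by the rough-isometric interpolation of the second step. The remaining ingredients --- the measure-theoretic reduction, the interpolation, and the identity $\P(\xi_X=\xi_Y)=\sum_w\mu_\rho(\{w\})^2$ --- are routine.
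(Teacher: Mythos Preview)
Your overall strategy---reduce to an annulus--crossing / effective--resistance estimate in the circle packing and show that harmonic measure of small boundary arcs decays---is the right idea, but there is a genuine gap in the transfer step. The cutsets $\Pi_k$ and the Nash--Williams bound live in the triangulation $T'$, whereas the random walk $\langle X_n\rangle$ lives on $G$. The interpolated path $\widehat X$ you build is \emph{not} a random walk on $T'$, so the Nash--Williams / effective--resistance machinery gives you no control over its crossing probabilities; the sentence ``a uniform probability, bounded away from zero, of being swept back outward before completing the next inward crossing'' is a statement about the Markov chain on $G$, and nothing in your outline produces cutsets in $G$ or an effective--resistance lower bound there. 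One could try to pull the $\Pi_k$ back through $\phi$, but bounding the conductance of the resulting cutsets uniformly in $k$ is exactly the content of the Dirichlet--energy comparison of \cref{lem:roughenergy}, which you never invoke. The interpolation to $\widehat X$ is thus a red herring: it shows the image curve converges to $\xi_X$, but that fact is not what is needed.

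The paper's proof is shorter and bypasses the interpolation entirely. It quotes \cite[Corollary~5.2]{ABGN14} as a black box for
\[
\CeffF\big(\phi(\rho)\leftrightarrow \phi(A_\eps(\xi));\,T'\big)\xrightarrow[\eps\to0]{}0
\]
(this is where the Beurling--type annulus argument is hidden), then applies \cref{lem:roughenergy} together with Rayleigh monotonicity to transfer this to $G$:
\[
\CeffF\big(\rho\leftrightarrow A_\eps(\xi);\,G\big)\le C\,\CeffF\big(\phi(\rho)\leftrightarrow \phi(A_\eps(\xi));\,T'\big)\xrightarrow[\eps\to0]{}0,
\]
and finishes with the standard bound $P_\rho(\text{hit }A_\eps(\xi))\le \CeffF(\rho\leftrightarrow A_\eps(\xi);G)\big/\Ceff(\rho\to\infty;G)$. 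The energy comparison lemma is precisely the device that moves the estimate from $T'$ to $G$; it replaces your second and third paragraphs in one line. Your opening reformulation via two independent walks and the Ring Lemma discussion are correct but play no role in either argument.
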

\begin{proof} Let $B_k(\rho)$ be the set of vertices of $G$ at graph distance at most $k$ from $\rho$, and let $G_{k}$ be the subnetwork of $G$ induced by $B_k(\rho)$ (i.e. the induced subgraph together with the conductances inherited from $G$).  Recall that the \textbf{free effective conductance} between a set $A$ and a set $B$ in an infinite graph $G$ is given by
\[ \CeffF(A\leftrightarrow B \,;\, G) = \min \left\{\cE(F) : F(a)=1 \,\,\forall a \in A,\;\; F(b)=0 \,\,\forall b \in B\right\}.\]
The same variational formula also defines the effective conductance between two sets $A$ and $B$ in a finite network $G$,  denoted $\Ceff(A\leftrightarrow B;G)$. 
A related quantity is the effective conductance from a vertex $v$ to infinity in $G$
\[ \Ceff(v\to\infty; G):= \lim_{k\to\infty}\CeffF(v \leftrightarrow V \setminus B_k(\rho);G)=\lim_{k\to\infty}\CeffF(v \leftrightarrow V \setminus B_k(\rho);G_{k+1}),\] which is positive if and only if $G$ is transient. See \cite[\S2 and \S9]{LP:book} for background on electrical networks.  
The inequality \eqref{eq:roughenergy} above implies that there exists a constant $C$ such that
\begin{equation}\label{eq:conductance} \CeffF(A\leftrightarrow B \,;\, G) \leq C \CeffF\left(\phi(A)\leftrightarrow \phi(B)\,;\, G'\right)\end{equation}
for each two sets of vertices $A$ and $B$ in $G$.

Let $\rho \in V$ and $\xi \in \partial \D$ be fixed, and let \[A_\eps(\xi):=\{v \in V : |z\circ\phi(v) - \xi|\leq \eps\}.\] In \cite[Corollary 5.2]{ABGN14}, it is proven that 
\[\lim_{\eps\to0} \CeffF\left(\phi(\rho)\leftrightarrow \phi(A_\eps(\xi)) \,;\, T'\right) =0.\]
Applying \eqref{eq:conductance} and Rayleigh monotonicity, we have that
\begin{multline}\CeffF\left(\rho\leftrightarrow A_\eps(\xi) \,;\, G\right) \leq  C\CeffF\left(\phi(\rho)\leftrightarrow \phi(A_\eps(\xi)) \,;\, G'\right)\\ \leq  C\CeffF\left(\phi(\rho)\leftrightarrow \phi(A_\eps(\xi)) \,;\, T'\right) \xrightarrow[\eps\to0]{}0.\label{eq:convergence1}\end{multline}
The well-known inequality of \cite[Exercise 2.34]{LP:book} then implies that
\begin{align*}P_\rho(\text{hit } A_\eps(\xi) \text{ before } V\setminus B_k(\rho) ) \leq \frac{\Ceff\left(\rho\leftrightarrow A_\eps(\xi) \cap B_k(\rho);G_{k+1}\right)}{\Ceff\left(\rho\leftrightarrow A_\eps(\xi) \cup  V\setminus B_k(\rho);G_{k+1}\right)}\\ \leq \frac{\Ceff\left(\rho\leftrightarrow A_\eps(\xi) \cap B_k(\rho);G_{k+1}\right)}{\Ceff\left(\rho\leftrightarrow  V\setminus B_k(\rho);G_{k+1}\right)}. \end{align*}
Applying the exhaustion characterisation of the free effective conductance \cite[\S 9.1]{LP:book},
which states that
\[\CeffF(A\leftrightarrow B ; G ) = \lim_{k\to\infty}\Ceff(A\leftrightarrow B ; G_{k}),\] we have that
 \begin{align} 
P_\rho\left(\text{hit } A_\eps(\xi)\right) &=\lim_{k\to\infty}P_\rho(\text{hit } A_\eps(\xi) \text{ before } V\setminus B_k(\rho) )\nonumber\\
& \leq \lim_{k\to\infty} \frac{\Ceff\left(\rho\leftrightarrow A_\eps(\xi) \cap B_k(\rho);G_{k+1}\right)}{\Ceff\left(\rho\leftrightarrow V\setminus B_k(\rho);G_{k+1}\right)}
 =  \frac{\CeffF\left(\rho\leftrightarrow A_\eps(\xi);G\right)}{\Ceff\left(\rho\to \infty ;G\right)}. \label{eq:convergence2}
 \end{align}
Combining \eqref{eq:convergence1} and \eqref{eq:convergence2} we deduce that
\[P_\rho\left(\lim_{n\to\infty}z\circ\phi(X_n)=\xi\right) \leq  \lim_{\eps\to0} P_\rho\left(\text{hit } A_\eps(\xi)\right)=0. \qedhere\]
\end{proof}

This concludes the part of \cref{thm:roughembeddings} concerning the existence of an embedding. 

\begin{remark} 
The statement that  $\P(\text{hit }A_\eps(\xi))$ converges to zero as $\eps$ tends to zero  for every $\xi\in\partial\D$ can also be used to deduce convergence of $z\circ\phi(X_n)$ without appealing to the results of \cite{AnLyPe99}: Suppose for contradiction that with positive probability  $z\circ\phi(X_n)$ does not converge, so that there exist two boundary points $\xi_1,\xi_2\in\partial\D$ in the closure of  $\{z\circ\phi(X_n) : n\geq 0\}$. It is not hard to see that in this case the closure of $\{z\circ\phi(X_n) : n\geq 0\}$ must contain one of the boundary intervals $[\xi_1,\xi_2]$ or $[\xi_2,\xi_1]$. However, if the closure of $\{z\circ\phi(X_n) : n\geq 0\}$ contains an interval of positive length with positive probability, then there exists a point $\xi\in\partial\D$ that is contained in the closure of  $\{z\circ\phi(X_n) : n\geq 0\}$ with positive probability. This contradicts the convergence  of $\P(\text{hit }A_\eps(\xi))$ to zero.
\end{remark}

\noindent\textit{Proof of \cref{thm:roughembeddings}, identification of the Poisson boundary.} 
Let $\langle X_n \rangle_{n\geq0}$ and $\langle Y_n \rangle_{n\geq0}$ be independent random walks on $G$ started at $\rho$, and for each $k\geq 0$ let $\langle Z^k_{j} \rangle_{j\geq 0}$ be a random walk on $G$ started at $X_k$ that is conditionally independent of $\langle X_n \rangle_{n\geq0}$ and $\langle Y \rangle_{n\geq0}$ given $X_k$.   Let $\P$ denote the joint distribution of $\langle X_n \rangle_{n\geq0}$, $\langle Y_n \rangle_{n\geq0}$ and all of the random walks $\langle Z^k_m \rangle_{m\geq0}$.

Suppose that $z$ is an embedding of $G'$ in $\D$ such that $z\circ \phi (X_n)$ converges to a point in $\partial\D$ almost surely and the law of the limit is non-atomic. 
Since the law of $\lim_{n\to\infty}z\circ\phi(X_n)$ is non-atomic, the random variables $\xi^+=\lim_{n\to\infty}z \circ \phi (X_n)$ and $\xi^-=\lim_{n\to\infty}z \circ \phi(Y_n)$ are almost surely distinct. Let 
\[Q_k:=\P\left(\lim_{m\to\infty} z\circ\phi(Z^k_m) \in (\xi^-,\xi^+) \,\middle|\, \langle X_n \rangle_{n\geq0}, \langle Y_n \rangle_{n\geq0} \right).\] 
Using the non-atomicity of the law of $\lim_{n\to\infty}z\circ\phi(X_n)$, the same argument as in the proof of \cref{Thm:generalPoisson} also shows that
\begin{equation}\label{eq:roughpoisson0} \limsup_{k\to\infty}\min\{Q_k,1-Q_k\} >0 \text{ almost surely.} \end{equation}


\medskip

We now come to a part of the proof that requires more substantial modification.  Let
$\langle v_i \rangle_{i\geq0}$ be  a path in $G$ starting at $\rho$ such that $z\circ\phi(v_i) \to \xi^+$.

Given a path $\langle u_i \rangle_{i\geq0}$ in $G$, let $\Phi(\langle u_i \rangle_{i\geq0})$ denote the path in $G'$ formed by concatenating the paths $\Phi(u_i,u_{i+1})$ for $i\geq 0$. For each $k\geq0$, let $\tau_k$ be the first time $t$ such that the path $\Phi(Z^k_{t-1},Z^k_{t})$ intersects the union of the traces of the paths $\Phi(\langle v_i \rangle_{i\geq0})$ and $\Phi(\langle Y_n \rangle_{n\geq0})$.
Since the union of the traces of the paths $\Phi(\langle v_i \rangle_{i\geq0})$ and $\Phi(\langle Y_n \rangle_{n\geq0})$ either contains $\phi(X_k)$ or disconnects $\phi(X_k)$ from at least one of the two intervals $(\xi^-,\xi^+)$ or $(\xi^+,\xi^-)$, we have that
\begin{equation*}\P\left(\tau_k <\infty \,\middle|\, \langle X_n \rangle_{n\geq0}, \langle Y_n \rangle_{n\geq0} \right) \geq \min\{Q_k,1-Q_k\}\end{equation*}
and so
\begin{equation}\label{eq:roughpoisson1}\limsup_{k\to\infty}\P\left(\tau_k <\infty \,\middle|\, \langle X_n \rangle_{n\geq0}, \langle Y_n \rangle_{n\geq0} \right) >0\end{equation}
almost surely by \eqref{eq:roughpoisson0}. 

 On the event that $\tau_k$ is finite, by definition of $\Phi$, there exists a vertex $u\in \{v_i :i \geq0\}\cup\{Y_n : n \geq 0\}$ such that $d'(\phi(Z_\tau),\phi(u))\leq 2\alpha+2\beta$, and consequently
$d(Z_\tau,u)\leq \alpha(2\alpha+3\beta)$ 
since $\phi$ is a rough isometry. Since $G$ has bounded degrees and edge conductances bounded above and below, $c(e)/c(u)\geq \delta$ for some $\delta>0$. Thus, by the strong Markov property,
\begin{multline}\label{eq:roughpoisson2} \P\left(\langle Z^k_m \rangle_{m\geq0} \text{ hits } \{v_i : i\geq 0\}\cup\{Y_n : n\geq0\} \,\middle|\, \langle X_n \rangle_{n\geq0},\, \langle Y_n \rangle_{n\geq0},\, \{\tau_k<\infty\} \right)\\ \geq \delta^{\alpha(2\alpha+3\beta)} >0  \end{multline}
for all $k\geq 0$. Combining \eqref{eq:roughpoisson1} and \eqref{eq:roughpoisson2} yields
\begin{equation}\label{eq:roughpoisson3}\limsup_{k\to\infty}\P\left(\langle Z^k_m \rangle_{m\geq0} \text{ hits }\{v_i: i\geq 0\}\cup\{ Y_n :n\geq 0\} \,\middle|\,\langle X_n \rangle_{n\geq0},\langle Y_n \rangle_{n\geq0}\right) >0  \end{equation}
almost surely. 
The same argument as in the proof of \cref{Thm:generalPoisson} also implies that
\begin{equation}\label{eq:roughpoisson4}\P\left(\langle Z^k_m\rangle_{m\geq0}\text{ hits }\{ Y_n :n\geq 0\}\,\middle|\, \langle X_n \rangle_{n\geq0}, \langle Y_n \rangle_{n\geq0}\right) \xrightarrow[k\to\infty]{a.s.}0.\end{equation}
We conclude by combining \eqref{eq:roughpoisson3} with \eqref{eq:roughpoisson4} and applying \cref{thm:criterion} to $\psi=z\circ\phi:V\to\D\cup\partial\D$. \qedhere

\end{proof}

\section{Identification of the Martin Boundary}\label{Sec:Martin}

In this section, we prove \cref{thm:comparison} and deduce \cref{Thm:Martin}. We begin by proving that the rectangle tiling of a bounded degree triangulation with edge conductances bounded above and below does not have any accumulations of rectangles other than at the boundary circle $\R/\eta\Z\times\{1\}$.
\begin{prop}
Let $T$ be a transient, simple, proper plane triangulation with bounded local geometry. Then for every vertex $v$ of $T$ and every $\eps>0$, there exist at most finitely many vertices $u$ of $T$ such that the probability that a random walk started at $u$ visits $v$ is greater than $\eps$.
\end{prop}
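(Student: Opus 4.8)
The plan is to reduce the statement to the decay of the Green function and then to control that decay using the circle packing of $T$, in close parallel with the way non-atomicity was controlled in \cref{lem:noatoms}. Write $G(u,w)=E_u[\#(\text{visits to }w)]$ for the Green function of the walk on $T$. The strong Markov property gives the identity $G(u,v)=P_u(\text{hit }v)\,G(v,v)$, and since $T$ is transient we have $G(v,v)\in(0,\infty)$. Hence $P_u(\text{hit }v)>\eps$ if and only if $G(u,v)>\eps\,G(v,v)$, so the proposition is equivalent to the assertion that $G(\cdot,v)\to 0$ as its argument leaves every finite set. This genuinely requires the triangulation hypothesis: if one attaches a ray to a transient bounded-degree planar graph, then the hitting probability of the base point of the ray stays equal to $1$ along the ray, so the bare transience and planarity of $T$ cannot suffice.

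First I would pass to the circle packing. Since $T$ is a transient bounded-degree triangulation it is CP hyperbolic by the He--Schramm theorem, so it admits a circle packing $\cC$ in $\D$ with associated embedding $z$. Because $\cC$ is locally finite, every compact subset of $\D$ meets only finitely many circles; consequently, if the set $\{u:P_u(\text{hit }v)>\eps\}$ were infinite, its image under $z$ would accumulate at some $\xi\in\partial\D$. It therefore suffices to prove that $P_{u}(\text{hit }v)\to0$ along any sequence of vertices with $z(u)\to\xi\in\partial\D$; equivalently, writing $A_\eps(\xi)=\{u: |z(u)-\xi|\le\eps\}$ as in \cref{lem:noatoms}, that $\sup_{u\in A_\eps(\xi)}P_u(\text{hit }v)\to0$ as $\eps\to 0$.

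Next I would run the capacity estimate. The same argument used in \cref{lem:noatoms} (the inequality of \cite[Exercise 2.34]{LP:book} together with the exhaustion characterisation of the free effective conductance) gives
\[ P_u(\text{hit }v)\ \le\ \frac{\CeffF(u\leftrightarrow v\,;\,T)}{\Ceff(u\to\infty\,;\,T)}. \]
By Rayleigh monotonicity, $\CeffF(u\leftrightarrow v\,;\,T)\le\CeffF(A_\eps(\xi)\leftrightarrow v\,;\,T)$ whenever $u\in A_\eps(\xi)$, and \cite[Corollary 5.2]{ABGN14} gives $\CeffF(A_\eps(\xi)\leftrightarrow v\,;\,T)\to0$ as $\eps\to0$. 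Thus the numerator is uniformly small for $u\in A_\eps(\xi)$ once $\eps$ is small.

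The main obstacle is the denominator: $\Ceff(u\to\infty\,;\,T)$ is the escape conductance from $u$, and a priori it could degenerate as $u\to\partial\D$ (indeed $G(u,u)=c(u)/\Ceff(u\to\infty\,;\,T)$, so if escape were difficult the walk from $u$ would return often and be correspondingly more likely to reach $v$). To close the argument I would establish that the escape conductance is bounded below uniformly in $u$, i.e.\ that the walk on $T$ is uniformly transient; this is where the triangulation hypothesis does the real work. Such a bound can be extracted from the bounded geometry of the packing furnished by the Ring Lemma of Rodin--Sullivan, which controls the ratios of radii of neighbouring circles in a bounded-degree packing and yields $\inf_u P_u(\text{escape})>0$. (Alternatively one may invoke the identification of the Martin boundary with $\partial\D$ from \cite[Theorem 1.2]{ABGN14} together with the fact that the equilibrium potential $u\mapsto P_u(\text{hit }v)$ is a potential, hence vanishes along sequences converging to the Martin boundary.) Granting a uniform lower bound $\Ceff(u\to\infty\,;\,T)\ge c_0>0$, the displayed inequality yields $\sup_{u\in A_\eps(\xi)}P_u(\text{hit }v)\le c_0^{-1}\,\CeffF(A_\eps(\xi)\leftrightarrow v\,;\,T)\to0$, which completes the proof.
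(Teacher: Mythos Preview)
Your reduction to the circle packing and the conductance inequality $P_u(\text{hit }v)\le\CeffF(u\leftrightarrow v;T)/\Ceff(u\to\infty;T)$ are correct, and the numerator indeed tends to zero by \cite[Corollary~5.2]{ABGN14}. The gap is exactly where you flag it, and neither of your proposed fixes closes it. The Martin-boundary alternative is circular: knowing $\partial\cM(T)\cong\partial\D$ only tells you that $M_{u_n}(\cdot)=G(\cdot,u_n)/G(\rho,u_n)$ converges as $z(u_n)\to\xi$; to deduce $G(v,u_n)\to0$ (equivalently $P_{u_n}(\text{hit }v)\to0$, by reversibility and bounded local geometry) you still need $G(\rho,u_n)\to0$, which is the very statement you are proving with $v=\rho$. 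The slogan ``potentials vanish at the Martin boundary'' holds along random-walk trajectories, not along arbitrary convergent sequences---your own ray example shows a potential that stays equal to $1$ along a sequence tending to a Martin boundary point. The Ring Lemma route is in the right direction but is not a one-line citation: you would have to build, uniformly in $u$, a unit flow from $u$ to $\partial\D$ of bounded energy, which is real work.

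The paper avoids this bookkeeping entirely by citing \cite[Lemma~5.3]{BS96a} directly: for every $\eps,\kappa>0$ there is $\delta>0$ such that if $|z(u)|\ge 1-\delta$ then the walk from $u$ hits $\{w:|z(w)|\le 1-\kappa\}$ with probability at most $\eps$. Setting $\kappa=1-|z(v)|$ and noting that only finitely many $u$ satisfy $|z(u)|<1-\delta$ finishes the proof in two lines. That lemma is itself proved in \cite{BS96a} by a geometric (extremal-length/Ring Lemma) argument on the packing---precisely the uniform-transience-type estimate your sketch gestures at---so if you want a self-contained argument you should reproduce that, not the conductance-ratio detour.
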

\begin{proof}
Let $\cC$ be a circle packing of $T$ in the unit disc $\D$, with associated embedding $z$ of $T$. Benjamini and Schramm \cite[Lemma 5.3]{BS96a} proved that for every $\eps>0$ and $\kappa>0$, there exists $\delta>0$ such that for any $v\in \tilde V$ with $|z(v)|\geq 1-\delta$, the probability that a random walk from $v$ ever visits a vertex $u$ such that $|z(u)|\leq 1-\kappa$ is at most~$\eps$. (Their proof is given for $c \equiv 1$ but extends immediately to this setting.)
 By setting $\kappa=1-|z(v)|$, it follows that for every $\eps>0$, there exists $\delta>0$ such that for every vertex $u$ with $|z(u)|\geq 1-\delta$, the probability that a random walk started at $u$ hits $v$ is at most $\eps$. The claim follows since $|z(u)|\geq 1-\delta$ for all but finitely many vertices $u$ of $T$. \end{proof}

\begin{corollary}\label{cor:stslocallyfinite}
Let $T$ be a transient proper plane triangulation with bounded local geometry, and let $\cS_\rho$ be a rectangle tiling of $T$. Then for every  $t\in[0,1)$, the cylinder $\R/\eta\Z\times[0,t]$ intersects only finitely many rectangles $S(e)\in\cS_\rho$.
\end{corollary}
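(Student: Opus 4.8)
The plan is to translate the geometric statement ``$S(e)$ intersects the cylinder $\R/\eta\Z\times[0,t]$'' into a condition on the heights $y$ of the endpoints of $e$, and then apply the preceding proposition with the target vertex taken to be the root $\rho$.

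First I would recall from the square tiling construction (\cref{Background:Tiling}) that $y(v)$ is exactly the probability that the random walk started at $v$ never visits $\rho$, so that $1-y(v)=P_v(\text{hit }\rho)$. Applying the preceding proposition with target vertex $v=\rho$ gives that for every $\eps>0$ there are only finitely many vertices $u$ with $P_u(\text{hit }\rho)>\eps$, equivalently only finitely many $u$ with $y(u)<1-\eps$. Hence, fixing $t\in[0,1)$ and choosing any $\eps$ with $0<\eps<1-t$, the set $\{v\in V : y(v)\leq t\}$ is finite, since it is contained in the finite set $\{v\in V : y(v)<1-\eps\}$.

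Next I would observe that the rectangle $S(e)$ occupies the height range $[\min(y(e^-),y(e^+)),\,\max(y(e^-),y(e^+))]$ (reducing to a single point when $e$ carries no current, where $y(e^-)=y(e^+)$). Thus $S(e)$ meets $\R/\eta\Z\times[0,t]$ if and only if its lowest point has height at most $t$, i.e.\ $\min(y(e^-),y(e^+))\leq t$, and this forces at least one endpoint $w\in\{e^-,e^+\}$ to satisfy $y(w)\leq t$. Since the set of such vertices $w$ is finite by the previous paragraph and each has degree at most $M$ by bounded local geometry, only finitely many edges $e$ can have an endpoint with $y$-value at most $t$. Therefore only finitely many rectangles $S(e)$ intersect the cylinder $\R/\eta\Z\times[0,t]$, as desired.

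All of the real work is carried out in the preceding proposition (itself resting on the Benjamini--Schramm estimate \cite[Lemma 5.3]{BS96a}); the only obstacle in the corollary is the bookkeeping translation, namely identifying $y$ with a non-hitting probability and checking that intersecting the truncated cylinder is governed by the smaller of the two endpoint heights. Once that identification is in place the finiteness is immediate from bounded degree.
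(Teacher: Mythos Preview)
Your proof is correct and is precisely the intended derivation: the paper states this corollary without proof, leaving it as an immediate consequence of the preceding proposition, and your argument---identifying $1-y(v)$ with $P_v(\text{hit }\rho)$, applying the proposition with target $\rho$, and then invoking bounded degree---is exactly the natural way to spell out that implication.
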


We also require the following simple geometric lemma. 

\begin{lemma}
\label{Lem:interpolation} Let $T$ be a transient proper plane triangulation with bounded local geometry, and let $\cS_\rho$ be the square tiling of $T$ in the cylinder $\R/\eta\Z\times[0,1]$. Then for every sequence of vertices  $\langle v_n \rangle_{n\geq0}$ such that $y(v_n)\to 1$ and $\theta(v_n)\to\theta_0$ for some $(\theta_0,y_0)$ as $n\to\infty$, there exists a path $\langle \gamma_n \rangle_{n\geq0}$ containing $\{v_n : n \geq 0\}$ such that $y(\gamma_n)\to 1$ and $\theta(\gamma_n)\to \theta_0$ as $n\to\infty$.
\end{lemma}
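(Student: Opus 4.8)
The plan is to build $\gamma$ by concatenating, for each $n$, a finite connecting path $P_n$ from $v_n$ to $v_{n+1}$, chosen so that $P_n$ lies in a box around $(\theta_0,1)$ that shrinks as $n\to\infty$; prepending any fixed finite path ending at $v_0$ then yields an infinite path containing every $v_n$. Two quantitative inputs drive the argument. First, by \cref{cor:stslocallyfinite} the set $F_t=\{v:y(v)\le t\}$ is finite for every $t<1$: a vertex $v\neq\rho$ with $y(v)\le t$ carries an upward edge whose rectangle meets $\R/\eta\Z\times[0,t]$, of which there are finitely many. In particular $y(w)\to1$ along any sequence leaving all finite sets. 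Second, bounded local geometry gives $\mathrm{length}(I(v))=\sum_{e^-=v}c(e)\big(y(e^+)-y(v)\big)\le M^2\big(1-y(v)\big)$, since $y(e^+)\le1$, there are at most $M$ summands, and $c(e)\le M$. Because property $(3)$ of the tiling forces $I(e)\subseteq I(v)\cap I(w)$ for an edge $e=(v,w)$, a single graph step between vertices of height $>1-\eps$ moves both the $y$-coordinate and the $\theta$-coordinate (arc distance on $\R/\eta\Z$) by $O(\eps)$; thus any path all of whose vertices are high makes only small individual jumps.

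It therefore suffices to produce, for each $n$, a path $P_n$ from $v_n$ to $v_{n+1}$ with $\kappa_n:=\max_{w\in P_n}\big((1-y(w))+d(\theta(w),\theta_0)\big)\to0$, since then concatenating the $P_n$ gives a path $\gamma$ with $y(\gamma_m)\to1$ and $\theta(\gamma_m)\to\theta_0$, as required.

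To construct $P_n$ I would enclose $(\theta_0,1)$ in a box cut out of $T$ by the tiling. Fix a generic height $s<1$ and generic horizontal coordinates $\theta_\pm=\theta_0\pm\Delta$. By \cref{cor:stslocallyfinite} the rectangles meeting height $s$ are finite in number, and their edges form a finite horizontal cut; by property $(4)$ the vertical segments $\{\theta_\pm\}\times[s,1)$ each meet a locally finite, upward-stacking sequence of rectangles, whose edges trace two ascending paths $\pi_\pm$ in $T$ that climb to the boundary with horizontal coordinate tending to $\theta_\pm$. These three cuts bound a region of the cylinder that is a topological disk $D$; since $T$ triangulates the cylinder, $T\cap D$ is a triangulated disk, so its vertex set is connected, and every vertex $w\in T\cap D$ satisfies $y(w)\ge s$ and $d(\theta(w),\theta_0)\le\Delta+o(1)$. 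Since $(\theta(v_n),y(v_n))\to(\theta_0,1)\in\mathrm{int}\,D$, for all large $n$ both $v_n$ and $v_{n+1}$ lie in $D$ and may be joined by a path $P_n\subseteq D$, giving $\kappa_n\le(1-s)+\Delta+o(1)$. Letting $s\uparrow1$ and $\Delta\downarrow0$ along the sequence, by a diagonal choice matched to the rate at which $(\theta(v_n),y(v_n))\to(\theta_0,1)$, forces $\kappa_n\to0$.

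The main obstacle is the box construction in the third step, that is, converting tiling geometry into graph connectivity. The essential difficulty is ruling out \emph{horizontal winding}: a priori a high path between $v_n$ and $v_{n+1}$ could wrap around the cylinder, so that merely staying high (which follows cheaply from the finiteness of $F_t$) does not control $\theta$. The vertical cuts $\pi_\pm$ are precisely the separators that forbid winding, and the technical heart is to realise them as genuine vertex paths in $T$ and to verify, using planarity together with property $(5)$ (and its analogue for horizontal sides) and local finiteness, that $\pi_-$, $\pi_+$ and the horizontal cut bound a disk triangulated by $T$ whose interior vertices form a connected subgraph with the stated coordinate bounds. Once this separation statement is established, the remaining steps are routine.
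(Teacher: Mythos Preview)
Your strategy is viable in outline, but it is considerably more indirect than the paper's, and the step you flag as ``the main obstacle'' is precisely what the paper's argument bypasses.

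The paper does not build a containing box at all. Instead, for each $n$ it simply draws the straight line segment $\ell_n$ in the cylinder from $(\theta(v_n),y(v_n))$ to $(\theta(v_{n+1}),y(v_{n+1}))$ (after a small adjustment to avoid degenerate rectangles and corners). Because both endpoints have height strictly below $1$, \cref{cor:stslocallyfinite} ensures $\ell_n$ meets only finitely many rectangles $S(e^n_1),\ldots,S(e^n_{l(n)})$. Consecutive rectangles along $\ell_n$ either share a horizontal side, in which case the corresponding edges share an endpoint, or share a vertical side, in which case property~(5) puts them on a common face; since $T$ is a triangulation, in either case there is a single edge in $T$ joining $e^{n+}_i$ to $e^{n-}_{i+1}$. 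Concatenating these edges gives a finite path $\gamma^n$ from $v_n$ to $v_{n+1}$, and the same length estimate you derived, $\mathrm{length}(I(v))\le M^2(1-y(v))$, shows every vertex on $\gamma^n$ has $(\theta,y)$ close to the segment $\ell_n$, hence close to $(\theta_0,1)$ for large $n$.

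Compared with your proposal, this ``line-to-path'' trick is exactly the mechanism you would need to realise your vertical cuts $\pi_\pm$ and your horizontal cut as genuine graph paths, but applied once to a \emph{compact} segment rather than three times to half-infinite ones. This sidesteps the issues you would have to confront: that $\pi_-$ and $\pi_+$ may coincide for a while near the bottom, that the region $D$ is not a compact disc but an end-neighbourhood, and that the connectivity of $T\cap D$ requires its own planarity/separation argument. Your approach could be completed, but it would end up reproving the paper's line-to-path lemma as a subroutine and then adding a layer of topology on top; the paper's route is strictly shorter.
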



\begin{proof}
Define a sequence of vertices $\langle v'_n \rangle_{n\geq0}$ as follows.
If the interval $I(v_n)$ is non-degenerate (i.e. has positive length), let $v'_n=v_n$. Otherwise, 
let $\langle \eta_j \rangle_{j\geq0}$ be a path from $v_n$ to $\rho$ in $T$, let $j(n)$ be the smallest $j>0$ such that the interval $I(\eta_{j(n)})$ is non-degenerate, and set $v'_n = \eta_{j(n)}$. 
Observe that $y(v'_n)=y(v_n)$, that the interval $I(v'_n)$ contains the singleton $I(v_n)$, and that the path $\tilde \eta^n = \langle\eta^n_j\rangle_{j=0}^{j(n)}$ from $v_n$ to $v'_n$ in $T$ satisfies $(I(\tilde \eta_j),y(\tilde \eta_j))=(I(v_n),y(v_n))$ for all $j<j(n)$.

Let $\theta'(v'_n) \in I(v_n)$ be chosen so that the line segment $\ell_n$ between the points $(\theta'(v'_n),y(v'_n))$ and $(\theta'(v'_{n+1}),y(v'_{n+1}))$ 
in the cylinder $\R/\eta\Z\times[0,1]$ does not intersect any degenerate rectangles of $\cS_\rho$ or a corner of any rectangles of $\cS_\rho$ (this is a.s. the case if $\theta'(v'_n)$ is chosen uniformly from $I(v_n)$ for each $n$). 
The line segment $\ell_n$ intersects some finite sequence of squares corresponding to non-degenerate rectangles of $\cS_\rho$, which we denote $e^n_1,\ldots,e^n_{l(n)}$. Let $e^n_i$ be oriented so that $y(e^{n+}_i)>y(e^{n-}_i)$ for every $i$ and $n$. For each $1 \leq i \leq l(n)-1$, either
\begin{enumerate}
\item 
 the vertical sides of the rectangles $S(e^n_i)$ and $S(e^n_{i+1})$ have non-disjoint interiors, in which case $e^n_i$ and $e^n_{i+1}$ lie in the boundary of a common face of $T$, or
 \item the horizontal sides of the rectangles $S(e^n_i)$ and $S(e^n_{i+1})$ have non-disjoint interiors, in which case $e^n_i$ and $e^n_{i+1}$ share a common endpoint.
\end{enumerate}
In either case, since $T$ is a triangulation, there exists an edge in $T$ connecting $e^{n+}_i$ to $e^{n-}_{i+1}$ for each $i$ and $n$. 
We define a path $\gamma^n$ by alternatingly concatenating  the edges $e_i^n$ and the edges connecting $e^{n+}_i$ to $e^{n-}_{i+1}$ as $i$ increases from 1 to $l(n)$. Define the path $\gamma$ by  concatenating all of the paths
 \[ \tilde\eta^n\circ\gamma^n\circ(-\tilde\eta^{n+1}) \]
 where $-\tilde\eta^{n+1}$ denotes the reversal of the path $\tilde\eta^{n+1}$. 

 Let $M$ be an upper bound for the degrees of $T$ and for the conductances and resistances of the edges of $T$.
For each vertex $v$ of $T$, there are at most $M$ rectangles of $\cS_\rho$ adjacent to $I(v)$ from above, so that at least one of these rectangles has width at least $\mathrm{length}(I(v))/M$. This rectangle must have height at least $\mathrm{length}(I(v))/M^2$. It follows that 
\begin{equation}\label{eq:path1}\mathrm{length}(I(v)) \leq M^{-2}\left(1-y(v)\right)\end{equation}
for all $v\in T$, and so for every edge $e$ of $T$,
\begin{equation}\label{eq:path2}|y(e^-)-y(e^+)| \leq \frac{1}{c(e)}\mathrm{length}\left(I(e^-)\right)\leq M^{-3}\left(1-y(e^-)\right)\end{equation}

By construction, every vertex $w$ visited by the path $\tilde\eta^n\circ\gamma^n\circ(-\tilde\eta^{n+1})$ has an edge emanating from it such that the associated rectangle intersects the line segment $\ell_n$, and consequently a neighbouring vertex $w'$ such that $y(w') \geq \min\{y(v_n),y(v_{n+1})\}$. Applying \eqref{eq:path2}, we deduce that
\[ y(w) \geq (1+M^{-3})\min\{y(v_n),y(v_{n+1})\}-M^{-3}\]
for all vertices $w$ visited by the path $\tilde\eta^n\circ\gamma^n\circ(-\tilde\eta^{n+1})$, and consequently $y(\gamma_k)\to1$ as $k\to\infty$. The estimate \eqref{eq:path2} then implies that $\mathrm{length}(I(\gamma_k))\to0$ as $k\to\infty$. Since $I(w)$ intersects the projection to the boundary circle of the line segment $\ell_n$ for each vertex $w$ visited by the path  $\tilde\eta^n\circ\gamma^n\circ(-\tilde\eta^{n+1})$, we deduce that $\theta(\gamma_k)\to\theta_0$ as $k\to\infty$. \qedhere
\end{proof}

We also have the following similar lemma for circle packings.

\begin{lemma}\label{lem:interpolation2} Let $T$ be a CP hyperbolic plane triangulation, and 
let $\cC$ be a circle packing of $T$ in $\D$ with associated embedding $z$. 
Then for every sequence $\langle v_n \rangle_{n\geq0}$ such that $z(v_n)$ converges as $n\to\infty$, there exists a path $\langle \gamma_n \rangle_{n\geq0}$ in $T$ containing $\{v_n : n \geq 0\}$ such that $z(\gamma_n)$ converges as $n\to\infty$.
\end{lemma}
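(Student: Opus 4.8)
The plan is to mirror the proof of \cref{Lem:interpolation}, replacing the rectangle tiling by the circle packing: I will connect each consecutive pair $v_n,v_{n+1}$ by a finite path in $T$ that shadows the straight segment between the two circle centres, and then concatenate these paths to obtain $\gamma$. First I would set $\xi=\lim_n z(v_n)\in\overline{\D}$ and dispose of the degenerate case $\xi\in\D$: since the packing is locally finite, only finitely many circles meet a compact neighbourhood of an interior point, so $z$ takes finitely many values on $\{v_n\}$ for large $n$, and, $z$ being injective, $v_n$ is eventually equal to the vertex $v^*$ with $z(v^*)=\xi$; one then joins $v_0,\dots,v^*$ by a finite path and appends a ray whose image converges (such a ray is produced by the boundary case below, applied to any sequence of vertices whose centres tend to a boundary point). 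So I may assume $\xi\in\partial\D$.

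The geometric input I would record is the elementary fact that for every $\delta>0$ only finitely many circles of $\cC$ have radius at least $\delta$: a circle of radius $\ge\delta$ contained in $\D$ has its centre in the compact set $\{|x|\le 1-\delta\}$, which meets only finitely many circles by local finiteness. Consequently a circle whose closure meets a small neighbourhood of $\xi$ must have small radius, with the single possible exception of a circle tangent to $\partial\D$ at $\xi$ itself. For the construction, fix $n$ and let $\ell_n$ be the straight segment from $z(v_n)$ to $z(v_{n+1})$; since $\overline B(\xi,\eps_n)$ is convex and contains both endpoints, $\ell_n\subseteq\overline B(\xi,\eps_n)$ with $\eps_n:=\max(|z(v_n)-\xi|,|z(v_{n+1})-\xi|)\to 0$. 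Viewing the centres-embedding as a straight-line triangulation of the carrier $\D$, and perturbing $\ell_n$ so that it avoids all vertices, $\ell_n$ crosses a finite sequence of faces $\Delta_1,\dots,\Delta_m$ with $\Delta_i\cap\Delta_{i+1}$ an edge of $T$; since $T$ is a triangulation each $\Delta_i$ is a genuine triangle whose three circles are mutually tangent, so its vertices are pairwise adjacent. Walking along the vertices of $\Delta_1,\dots,\Delta_m$ through the shared edges of consecutive faces yields a finite path $\gamma^n$ from $v_n$ to $v_{n+1}$, all of whose vertices are vertices of faces crossed by $\ell_n$.

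It then remains to show $z(\gamma_k)\to\xi$. A vertex $w$ on $\gamma^n$ lies on a face $\Delta$ meeting $\ell_n\subseteq\overline B(\xi,\eps_n)$, so $|z(w)-\xi|\le\operatorname{diam}(\Delta)+\eps_n$, and $\operatorname{diam}(\Delta)$ is at most the sum of the radii of the three circles of $\Delta$. By the geometric input, provided none of these circles is tangent to $\partial\D$ at $\xi$, all of them have radius tending to $0$ as $\eps_n\to0$, which gives $z(\gamma_k)\to\xi$.

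The main obstacle, and the only point that is not completely routine, is the possible single exceptional circle $C^*$ tangent to $\partial\D$ at $\xi$: its centre $z(v^*)$ stays a fixed distance from $\xi$ while the cusp between $C^*$ and $\partial\D$ is filled by infinitely many small circles accumulating at $\xi$, so that $z(v_n)\to\xi$ may occur through these cusp circles. If $\ell_n$ were to cross a face incident to $v^*$, the naive path would visit $v^*$ and fail to converge. I would handle this by rerouting: since there are only finitely many large circles, such excursions occur only near these finitely many circles, and near $C^*$ I would replace the offending portion of $\gamma^n$ by a detour through the chain of small circles lying in the cusp on the $\partial\D$-side of $C^*$. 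The substantive fact to verify is the local connectivity of the packing at $\xi$, namely that for every $\eps>0$ there is $\eps'\in(0,\eps)$ such that any two vertices with centres in $B(\xi,\eps')$ can be joined by a path in $T$ whose vertices all have centres in $B(\xi,\eps)$; this follows from the finiteness of large circles together with the fact that the carrier is all of $\D$, so that $B(\xi,\eps)\cap\D$ is connected. Granting this, the rerouted path $\gamma$ contains $\{v_n:n\ge0\}$ and satisfies $z(\gamma_k)\to\xi$, completing the proof.
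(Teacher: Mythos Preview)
Your approach is essentially the same as the paper's: connect consecutive $v_n, v_{n+1}$ by a path that shadows the straight segment $\ell_n$ between their centres, then use the fact that only finitely many circles have radius exceeding any fixed $\eps$ (the paper obtains this from the area bound $\sum \pi r(v)^2 \le \pi$ rather than local finiteness, but either works) to conclude that all circles visited by $\gamma^n$ are small for large $n$, hence have centres near $\xi$.

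The one substantive difference is your concern about an exceptional circle $C^*$ tangent to $\partial\D$ at $\xi$, which leads you into a rerouting argument. This case cannot occur. Since $T$ is a (proper plane) triangulation, every vertex $v$ is interior: its neighbours form a cycle, and the corresponding circles together with the interstices form a closed chain---the \emph{flower} of $v$---whose interior (as a subset of the plane) contains the closed disc $C(v)$. The flower lies in the carrier $\D$, so $C(v)$ lies in the open disc and cannot touch $\partial\D$. Consequently, for each of the finitely many circles of radius at least $\eps$, the closed disc is at positive distance from $\xi$, and for $n$ large the segment $\ell_n$ misses all of them. Your steps 1--4 then already give $z(\gamma_k)\to\xi$, and the detour through cusps is unnecessary. (The interior-limit case $\xi\in\D$ is likewise not needed for the application in the paper, though your handling of it is fine.)
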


\begin{proof}
The proof is similar to that of \cref{Lem:interpolation}, and we provide only a sketch. Draw a straight line segment in $\D$ between the centres of the circles corresponding to each consecutive pair of vertices $v_n$ and $v_{n+1}$. The set of circles intersected by the line segment contains a path $\gamma^n$ in $T$ from $v_n$ to $v_{n+1}$. (If this line segment is not tangent to any of the circles of $\cC$, then the set of circles intersected by the segment is exactly a path in $T$.)  The path $\gamma$ is defined by concatenating the paths $\gamma^n$. For every $\eps>0$, there are at most finitely many $v$ for which the radius of the circle corresponding to $v$ is greater than $\eps$, since the sum of the squared radii of all the circles in the packing is at most 1. Thus, for large $n$, all the circles corresponding to vertices used by the path $\gamma^n$ are small.  The circles that are intersected by the line segment between $z(v_n)$ and $z(v_{n+1})$ therefore necessarily have centers close to $\xi_0$ for large $n$. We deduce that $z(\gamma_i)\to \xi_0$ as $i \to \infty$. \qedhere


\end{proof}
\begin{proof}[Proof of \cref{thm:comparison}.]
Let $\langle X_n \rangle_{n\geq0}$ be a random walk on $T$. Our assumptions guarantee that $\theta(X_n)$ and $z(X_n)$ both converge almost surely as $n\to\infty$ and that the laws of these limits are both non-atomic and have support $\R/\eta\Z$ and $\partial\D$ respectively.

Suppose that $\gamma$ is a path in $T$ that visits each vertex at most finitely often. We claim that  $\theta(\gamma_i)$ converges if and only if $z(\gamma_i)$ converges if and only if $X_n$ almost surely does not hit $\gamma_i$ infinitely often. We prove that $\theta(\gamma_i)$ converges if and only if $X_n$ almost surely does not hit $\gamma_i$ infinitely often. The proof for $z(\gamma_i)$ is similar.

If $\theta(\gamma_i)$ converges, then $X_n$ almost surely does not hit $\gamma_i$ infinitely often, since otherwise $\lim_{i\to\infty} \theta(\gamma_i)$ would be an atom in the law of $\lim_{n\to\infty} \theta(X_n)$.
Conversely, if $\theta(\gamma_i)$ does not converge, then  there exist at least two distinct points $\theta_1,\theta_2\in\R/\eta\Z$ such that $\theta_1\times\{1\}$ and $\theta_2\times\{1\}$ are in the closure of $\{(\theta(\gamma_i),y(\gamma_i)):n\geq0\}$. Let $\langle Y_n \rangle_{n\geq 0}$ be a random walks started at $\rho$ independent of $\langle X_n \rangle_{n\geq0}$. Since the law of $\lim_{n\to\infty}\theta(X_n)$ has full support, we have with positive probability that $\lim_{n\to\infty}\theta(X_n)\in(\theta_1,\theta_2)$ and $\lim_{n\to\infty}\theta(Y_n)\in(\theta_2,\theta_1)$. 
On this event, the union of the traces $\{ X_n : n\geq0\}\cup\{Y_n : n \geq 0\}$ disconnects $\theta_1\times\{1\}$ from $\theta_2\times\{1\}$, and consequently the path $\langle \gamma_i \rangle_{n\geq0}$ must hit  $\{ X_n : n\geq0\}\cup\{Y_n : n \geq 0\}$ infinitely often.
By symmetry, there is a positive probability that $\langle X_n \rangle_{n\geq0}$ hits the trace $\{\gamma_i :n\geq0\}$ infinitely often as claimed. 
\medskip

We deduce that for every path $\gamma$ in $T$ that visits each vertex of $T$ at most finitely often, $\theta(\gamma_i)$ converges if and only if $z(\gamma_i)$ converges. It follows from this and \cref{Lem:interpolation,lem:interpolation2} that for any sequence of vertices $\langle v_i \rangle_{i\geq0}$ in $T$ that includes each vertex of $T$ at most finitely often, the sequence $\theta(v_i)$ converges if and only if $z(v_i)$ converges, and hence that 
the map $\xi \mapsto \theta(\xi)=\lim_{z(v)\to\xi} \theta(v)$ 
is well defined.
To see that this map is a homeomorphism, suppose that $\xi_n$ is a sequence of points in $\partial \D$ converging to $\xi$. For every $n$, there exists a vertex $v_n\in V$ such that $|\xi_n-z(v_n)|\leq 1/n$ and $|\theta(\xi_n)-\theta(v_n)|\leq 1/n$. Thus, $z(v_n)\to\xi$ and we have
\[|\theta(\xi)-\theta(\xi_n)|\leq |\theta(\xi)-\theta(v_n)|+|\theta(v_n)-\theta(\xi_n)|\xrightarrow[n\to\infty]{}0.\]
The proof of the continuity of the inverse is similar.
\end{proof}




\begin{proof}[Proof of \cref{Thm:Martin}]
By \cite[Theorem 1.2]{ABGN14}, a sequence of vertices $\langle v_i \rangle_{i\geq0}$ converges to a point in the Martin boundary of $T$ if and only if $z(v_i)$ converges to a point in $\partial\D$, and the map 
\[ \xi \mapsto M_\xi := \lim_{\i\to\infty} M_{v_i} \text{ where } z(v_i) \to \xi  \]
is a homeomorphism from $\partial \D$ to $\partial \cM (T)$. Combining this with \cref{thm:comparison} completes the proof. 
\end{proof}

\subsection*{Acknowledgements}
This work was carried out while TH was an intern at Microsoft Research, Redmond. We thank Russ Lyons and Asaf Nachmias for helpful discussions. We thank Itai Benjamini for granting us permission to include the square tiling of Figure 1, which originally appeared in \cite{BS96b}. The circle packing in Figure 1 was created using Ken Stephenson's CirclePack software. 

\bibliographystyle{abbrv}
\bibliography{unimodular}

\end{document}